\documentclass[reqno,centertags,a4paper,12pt]{amsart}

\usepackage{amsfonts, amsmath, amssymb, amsthm}
\usepackage{latexsym}
\usepackage{eucal}
\usepackage[dvips]{graphics}
\usepackage[english]{babel}
\usepackage[latin1]{inputenc}
\usepackage{enumerate}

\setlength{\textwidth}{15.5 cm}
\addtolength{\oddsidemargin}{-1.4cm}
\addtolength{\evensidemargin}{-1.4cm}

\numberwithin{equation}{section}

\newtheorem{Theorem}{Theorem}[section]
\newtheorem{Lemma}{Lemma}[section]

\newcommand{\pd}{\partial}

\def\R{\mathbb{R}}

\def\Z{\mathbb{Z}}
\def\C{\mathbb{C}}

\begin{document}
\title[Bousinesq type system]{On the Cauchy problem for a Boussinesq type system}

\author[F. Linares]{F.~Linares}
\author[M.~Panthee]{M.~Panthee}
\author[J.~Drumond~Silva]{J.~Drumond~Silva}

\address{Felipe~Linares: IMPA, Estrada Dona Castorina 110, 22460--320, Rio de Janeiro, RJ, Brazil.}
\email{linares@impa.br}
\address{Mahendra~Panthee:
 CMAT, University of Minho, Campus de Gualtar, 4710--057, Braga Portugal.}
\email{mpanthee@math.uminho.pt}
\address{Jorge~Drumond~Silva:
Center for Mathematical Analysis, Geometry and Dynamical Systems,
Departamento de Matem\'atica,
Instituto Superior T\'ecnico,
 Av. Rovisco Pais, 1049-001 Lisboa, Portugal.}
\email{jsilva@math.ist.utl.pt}

\subjclass{35Q53}

\keywords{KdV equation, Boussinesq equation, Initial value problem, Well-posedness}

\thanks{Research partially supported by the FCT/CAPES ``Nonlinear Waves and Dispersion'' Project.
 J. Drumond Silva was partially supported by the Center for Mathematical Analysis,
Geometry and Dynamical Systems through the Funda\c{c}\~ao para a Ci\^encia e Tecnologia (FCT/Portugal)
Program POCTI/FEDER. M. Panthee was supported  by grant Est-C/MAT/UI0013/2011 from FEDER Funds through ``Programa Operacional Factores de Competitividade - COMPETE'', and by grant PTDC/MAT/109844/2009 from Portuguese Funds through FCT - ``Funda\c c\~ao para a Ci\^encia e a Tecnologia''.}

\begin{abstract} We consider the initial value problem (IVP) associated to a Boussinesq type system. After rewriting
the system in an equivalent form of coupled KdV-type equations, we prove that this is locally well-posed in $(H^s(\R^2))^4$, $s>3/2$, using sharp smoothing estimates. Consequently we obtain the local well-posedness result for the original system in $H^s\times \mathcal{V}^{s+1}$ for $s>3/2$ (see below for the definition of $\mathcal{V}^{s}$).
\end{abstract}

\maketitle


\section{Introduction}

We consider the following Boussinesq type system
\begin{equation}\label{ivp1}
\begin{cases}
\eta_t+\Delta \Phi -\frac\mu6\Delta^2\Phi =- \epsilon \nabla \cdot\big[ \eta\big((\pd_{x_1}\Phi)^p,(\pd_{x_2}\Phi)^p \big)
\big],\\
\Phi_t+\eta-\mu(\sigma -1/2)\Delta\eta = - \frac\epsilon{p+1}\big((\pd_{x_1}\Phi)^{p+1}+(\pd_{x_2}\Phi)^{p+1}\big),
\end{cases}
(t, x) =(t, x_1, x_2)\in \R^{1+2},
\end{equation}
where $\epsilon$ is the amplitude parameter, $\mu$ is the long-wave parameter and $\sigma$ is the Bond number.
This system was derived in \cite{Q2} as a model for the evolution of three dimensional long water waves with small amplitude, where $\Phi$ represents the non dimensional velocity potential at the bottom $z=0$ and $\eta$ represents the free surface elevation.

Well-posedness of the associated Cauchy problem and stability of solitary waves for this system was studied in \cite{Q1}. In this paper the author  proved that the initial value problem (IVP) associated to \eqref{ivp1} is locally well-posed for given initial data in $H^s(\R^2)\times \mathcal{V}^{s+1}(\R^2)$, $s>2$, where $H^s(\R^2)$ is the usual $L^2$-based Sobolev space and $\mathcal{V}^s(\R^2)$ is the Hilbert space defined by
$$\mathcal{V}^s(\R^2):=\big\{f\in \mathcal{S}'(\R^2): \sqrt{-\Delta}f \in H^{s-1}\big\},$$
with the corresponding norm $\|f\|_{\mathcal{V}^s(\R^2)}=\|\sqrt{-\Delta}f\|_{H^{s-1}(\R^2)}$.
The idea used in \cite{Q1} is based on a change of variables by taking $u=\pd_{x_1} \Phi$ and $v=\pd_{x_2} \Phi$, thus enlarging the system to a new one for $U=(\eta,u,v)$, and then employing Kato's approach for the generalized Korteweg-de Vries (KdV) equation. The author also proves the existence of global weak solutions in time, for small initial data, by considering a variational approach, which involves the Hamiltonian defined in the energy space $H^1\times \mathcal{V}^2$.

In this work we aim to improve the local results obtained in \cite{Q1}. For the sake of simplicity
we consider $\mu =6$, $\sigma=2/3$ and $\epsilon =1$ in \eqref{ivp1} and we start by diagonalizing the constant coefficient
linear part of
the resulting system
\begin{equation} \label{mainsystem}
\begin{cases}
\eta_t+\Delta \Phi -\Delta^2\Phi =- \nabla \cdot\big[ \eta(\Phi_{x_1}^p,\Phi_{x_2}^p )
\big],\\
\Phi_t+\eta-\Delta\eta = - \frac 1 {p+1} (\Phi_{x_1}^{p+1}+\Phi_{x_2}^{p+1}),
\end{cases}
\end{equation}
to obtain the following equivalent system
\begin{equation}\label{ivp2}
\begin{cases}
u_t+ i\sqrt{-\Delta}(1-\Delta)u = -N_1(u,v) -N_2(u,v),\\
v_t- i\sqrt{-\Delta}(1-\Delta)v = N_1(u,v) -N_2(u,v),
\end{cases}
\end{equation}
where the nonlinearities are given by
\begin{equation}\label{eq1.3}
\begin{cases}
N_1(u,v)=\frac1{2\sqrt{-\Delta}}\Big\{\partial_{x_1}\big[\{\sqrt{-\Delta}(u-v)\}(u_{x_1}+v_{x_1})^p\big]+\partial_{x_2}\big[\{\sqrt{-\Delta}(u-v)\}(u_{x_2}+v_{x_2})^p\big] \Big\},\\
N_2(u,v) = \frac1{2(p+1)}\big[(u_{x_1}+v_{x_1})^{p+1} + (u_{x_2}+v_{x_2})^{p+1}\big].
\end{cases}
\end{equation}

In the diagonalization process the relation between the original functions $(\eta, \Phi)$ and the
new functions $(u,v)$ is given by
\begin{equation}
\begin{cases}
u= -\frac{i}{2\sqrt{-\Delta}}\eta + \frac12 \Phi,\\
v = \frac{i}{2\sqrt{-\Delta}}\eta + \frac12 \Phi,
\end{cases}
\end{equation}
so that $(\eta, \Phi)$ can be recovered from $(u,v)$ by
\begin{equation}
\label{eq1.6}
\begin{cases}
\eta = i\sqrt{-\Delta}u-i\sqrt{-\Delta}v,\\
\Phi= u+v.
\end{cases}
\end{equation}

This change of variables transforms the original system \eqref{mainsystem}, with different differentiability requirements
for $(\eta, \Phi)$, into a more symmetric form \eqref{ivp2} of
two coupled KdV-type equations with the same third order dispersive operator in the spatial variables
$i\sqrt{-\Delta}(1-\Delta)$ for $(u,v)$. The main difficulty now stems from the nonlinearities \eqref{eq1.3},
involving powers of the
derivatives of the unknowns. To deal with this issue, we differentiate the diagonalized system in $x_1$ and $x_2$,
enlarging it to a system of four equations for the partial derivatives of the unknowns $(u_{x_1},u_{x_2},v_{x_1},v_{x_2})$
(see equation \eqref{eq3.1} below) thus turning the nonlinearities into standard KdV-type ones. It is this final system
that we then solve, using classical methods (see \cite{KPV4}) involving Strichartz, maximal and smoothing estimates, which are developed in the next section of this
paper.

In the third section we then prove the local well-posedness result that constitutes the main theorem of this paper.
\begin{Theorem}\label{mainTh} Let $p=1$ and $s>3/2$, then for given data
$(\eta_0, \Phi_0)\in H^s(\R^2)\times \mathcal{V}^{s+1}(\R^2)$, there exist a time $T=T\big(\|\eta_0\|_{H^s(\R^2)}+\|\Phi_0\|_{\mathcal{V}^{s+1}(\R^2)}\big)>0$, a space $\mathcal{Y}^s_T$ continuously
embedded in $C([0, T]:H^s(\R^2)\times \mathcal{V}^{s+1}(\R^2))$ and a unique solution $(\eta,\Phi) \in \mathcal{Y}^s_T$ to the IVP associated to \eqref{ivp1}.
\end{Theorem}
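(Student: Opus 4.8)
The strategy is to work with the diagonalized system \eqref{ivp2}, differentiate it in $x_1$ and $x_2$ to reach the enlarged system \eqref{eq3.1} for the quadruple $W = (u_{x_1}, u_{x_2}, v_{x_1}, v_{x_2})$ (here $p=1$, so the nonlinearities $N_1, N_2$ become quadratic in $W$ together with the single nonlocal factor $\sqrt{-\Delta}(u-v)$), and then solve that system in a suitable function space $\mathcal{X}^s_T$ built by norming the Duhamel iterates in the mixed-norm spaces governing the linear propagator $e^{\mp i t \sqrt{-\Delta}(1-\Delta)}$. First I would set up the contraction: write $W(t) = S(t)W_0 - \int_0^t S(t-t') \mathcal{N}(W)(t')\, dt'$, where $S(t)$ is the (matrix) linear group and $\mathcal{N}$ collects the derivatives of $N_1 + N_2$ and $N_1 - N_2$. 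The space $\mathcal{X}^s_T$ carries the $C([0,T];H^s)$ norm of $W$ together with the Strichartz norm, the maximal-function norm (to control $L^2_x L^\infty_T$-type terms coming from the non-derivative factors), and the smoothing norm that recovers the one derivative lost in the worst quadratic term $D_{x_j}\big[(\sqrt{-\Delta}(u-v)) \cdot (\text{something in } W)\big]$.

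The key steps, in order: (i) record from Section 2 the linear estimates for $S(t)$ — the homogeneous and inhomogeneous Strichartz estimates, the Kato-type local smoothing estimate $\|D_x S(t) f\|_{L^\infty_x L^2_T} \lesssim \|f\|_{L^2}$ (and its dual/retarded version), and the maximal function estimate $\|S(t)f\|_{L^2_x L^\infty_T} \lesssim \|f\|_{H^s}$ for $s>3/2$ — all adapted to the operator $\sqrt{-\Delta}(1-\Delta)$ in two space dimensions; (ii) estimate the nonlinear terms in $\mathcal{X}^s_T$: the one-derivative loss in $\mathcal{N}(W)$ is absorbed by the smoothing gain, the resulting product is distributed via Hölder so that one factor sits in $L^2_x L^\infty_T$ (bounded by the maximal norm, hence by $H^s$, $s>3/2$) and the other in $L^\infty_x L^2_T$ (bounded by the smoothing norm), while lower-order and nonlocal pieces — note $\sqrt{-\Delta}(u-v)$ can be rewritten in terms of Riesz transforms of $\eta$, so it is controlled at the level of $H^s$ — are handled by the remaining norms; (iii) close the argument with the standard fixed-point/continuity scheme on a ball of $\mathcal{X}^s_T$ for $T$ small depending only on $\|W_0\|_{H^s}$, which then gives $W \in C([0,T];H^s)$, i.e. $(u,v)\in C([0,T];H^{s+1})$ with the extra regularity on the derivatives encoded in $\mathcal{X}^s_T$; (iv) translate back via \eqref{eq1.6}: $\Phi = u+v$ and $\eta = i\sqrt{-\Delta}(u-v)$, so $\Phi \in C([0,T];\mathcal{V}^{s+1})$ (since $\sqrt{-\Delta}\Phi \in H^s$) and $\eta \in C([0,T];H^s)$, recovering a solution of \eqref{ivp1}; uniqueness follows from the contraction estimates, and the space $\mathcal{Y}^s_T$ is the image of $\mathcal{X}^s_T$ under \eqref{eq1.6}.

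The main obstacle I expect is step (ii) for the term $\partial_{x_j}\big[\{\sqrt{-\Delta}(u-v)\}\,(u_{x_j}+v_{x_j})\big]$ in $N_1$: after differentiation this produces a genuine derivative hitting a product of two functions each at the critical regularity level, and balancing the derivative distribution requires using the smoothing estimate on exactly the right factor while keeping the other in the maximal-function space. The restriction $s>3/2$ is precisely the threshold at which the two-dimensional maximal function estimate for the dispersive group holds, so the argument is sharp in that sense; checking that the nonlocal operators ($\sqrt{-\Delta}$, its inverse against the extra $\partial_{x_j}$, and the Riesz transforms) do not destroy these mixed-norm bounds is the remaining technical point, and it is handled by the smoothing and maximal estimates developed in Section 2 together with the boundedness of Riesz transforms on the relevant spaces.
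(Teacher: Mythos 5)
Your overall route is the same as the paper's: diagonalize, differentiate to obtain the $4\times 4$ system \eqref{eq3.1} for $(u_{x_1},u_{x_2},v_{x_1},v_{x_2})$, run a contraction in a space built from the smoothing, maximal function and Strichartz norms of Section 2, and then return to $(\eta,\Phi)$ via \eqref{eq1.6}. However, your step (iv) has a genuine gap precisely at the point the paper singles out as nontrivial: the fixed point argument produces a quadruple $w=(w_1,w_2,w_3,w_4)$ of functions in $\mathcal{X}^s_T$, and nothing in the contraction guarantees that $(w_1,w_2)$ and $(w_3,w_4)$ remain gradients of single functions $u$ and $v$ for $t>0$. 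Writing ``$W\in C([0,T];H^s)$, i.e.\ $(u,v)\in C([0,T];H^{s+1})$'' assumes exactly what must be proved: that the solution of the enlarged system is equivalent to a solution of \eqref{ivp2}. The missing argument is the propagation of the irrotationality constraint. One must use the specific structure of the right-hand sides of \eqref{eq3.1} (the nonlinearity in the $w_1$ equation is $\pd_{x_1}$ of the same expression whose $\pd_{x_2}$ appears in the $w_2$ equation, and similarly for $w_3,w_4$) to deduce
\begin{equation*}
\pd_t\big(\pd_{x_2}w_1-\pd_{x_1}w_2\big)=0, \qquad \pd_t\big(\pd_{x_2}w_3-\pd_{x_1}w_4\big)=0,
\end{equation*}
which, combined with the fact that the initial data are constructed as gradients (so the curls vanish at $t=0$), shows the vector fields are closed on $[0,T]$ and only then yields unique potentials $u,v$ with $(\pd_{x_1}u,\pd_{x_2}u)=(w_1,w_2)$, $(\pd_{x_1}v,\pd_{x_2}v)=(w_3,w_4)$. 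Without this step the inversion back to $(\eta,\Phi)$ and hence the claim that one has solved \eqref{ivp1} is unjustified; this equivalence is exactly the point the authors remark is missing in the earlier literature.

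A secondary, related inaccuracy: the potentials $u,v$ recovered this way lie in $C([0,T]:\mathcal{V}^{s+1}(\R^2))$, not in $C([0,T]:H^{s+1}(\R^2))$ as you state, since the argument only controls their gradients in $H^s$ and the functions themselves need not belong to $L^2$. This is not cosmetic: it is what makes the solution space match the hypothesis $\Phi_0\in\mathcal{V}^{s+1}(\R^2)$ and the definition of $\mathcal{Y}^s_T$ in the statement of the theorem.
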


Throughout the rest of this paper, we consider  $p=1$ in \eqref{eq1.3}.

\bigskip

\noindent
{\bf{Remarks}}:

\begin{enumerate}[i)]
\item Although we only treat the case $p=1$, the same method can be adapted for any integer value of $p>1$. The regularity for which LWP can be achieved will naturally depend on $p$.
\item This result not only extends the LWP claim in \cite{Q1} but it also provides a complete and detailed proof of the equivalence between the original system and the one obtained through differentiation, which seems to be missing there.
\end{enumerate}

\bigskip

\noindent
{\bf{Notation}}:
Finally we introduce some notation that we will use in this work. We use $\hat{f}$ to denote  the Fourier transform of $f$ and $H^s=(1-\Delta)^{- s/2}L^2 $  to denote the $L^2$-based Sobolev space of order $s$.
The Riesz potential of order $-s$ is denoted by $D_x^s =(-\Delta)^{s/2}$, so that one has $D_x^1 = \sqrt{-\Delta}$
with Fourier symbol given by $|\xi|$. We also define the Riesz transforms $R_l$, $l=1,2$ {\em via} the Fourier transform by $R_l(f)=\big(-i\frac{\xi_l}{|\xi|}\hat{f}\big)^{\vee}$.

For $f:\R^n\times [0, T] \to \R$ we define the mixed
 $L_T^q L_x^p$-norm by
\begin{equation*}
\|f\|_{L_T^q L_x^p} = \left(\int_0^T \left(\int_{\R} |f(x, t)|^p\,dx
\right)^{q/p}\,dt\right)^{1/q},
\end{equation*}
with usual modifications when $p = \infty$. An analogous definition is used for the other mixed norms $L_x^pL_T^q$,
with the order of integration in time and space interchanged.
We replace $T$ by $t$ if $[0, T]$ is the whole real line $\R$.

We use $c$ to denote various  constants whose exact values  may
 vary from one line to the next. We use $A\lesssim B$ to denote an estimate
of the form $A\leq cB$ for some $c$, and $A\sim B$ if $A\lesssim B$ and $B\lesssim A$. Also, we
use the notation $a+$ to denote $a+\epsilon$ for $\epsilon > 0$.


\section{Linear Estimates}
In this section we present some estimates satisfied by the group associated to the linear part of the diagonalized system \eqref{ivp2}. These estimates are very similar to the analogous ones in \cite{LPS}, and are actually simpler here due to the better symbol of the phase of the linear propagator.

We thus consider the following pair of linear equations
\begin{equation}\label{eq2.1}
\begin{cases}
w_t\pm i\sqrt{-\Delta}(1-\Delta)w = 0\\
w(x, 0) = w_0(x),
\end{cases}
\end{equation}
whose solution is described by the unitary groups $\widehat{U^{\mp}(t)w_0}(\xi) = e^{\mp it\phi(\xi)}\widehat{w_0}(\xi)$, where $\phi(\xi)$ is the Fourier symbol given by $\phi(\xi) =|\xi|^3+|\xi|$. Note that, as in \cite{LPS}, this symbol is a radial function, whose specific Fourier transform properties are a crucial ingredient exploited in the proofs that follow. We will only derive estimates associated to the unitary group $U:=U^+$, as the estimates for $U^-$ are the same.

We start with the smoothing estimate.
\begin{Theorem}\label{th.1}
Let $T>0$ and $\{Q_{\alpha}\}_{\alpha\in \Z^2}$ be a family of non-overlapping cubes of unit size so that $\R^2 = \cup_{\alpha\in \Z^2}Q_{\alpha}$. Then the following estimate holds
\begin{equation}\label{eq2.2}
\sup_{\alpha}\Big(\int_{Q_{\alpha}}\int_0^T |D_x^1U(t)w_0(x)|^2dtdx\Big)^{\frac12}\lesssim \|w_0\|_{L_x^2},
\end{equation}
where the implicit constant does not depend on T.
\end{Theorem}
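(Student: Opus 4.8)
The plan is to reduce to a single cube and then exploit the radial structure of the phase $\phi(\xi)=|\xi|^3+|\xi|$, much as in \cite{LPS}. Since $U(t)$ commutes with spatial translations, which preserve the $L^2$ norm, the supremum over $\alpha$ is harmless and it suffices to treat $Q_0=[-1/2,1/2]^2$; moreover, enlarging the time interval only increases the left-hand side, so I will prove the stronger, manifestly $T$-independent estimate
\begin{equation*}
\int_{Q_0}\int_{\R}|D_x^1U(t)w_0(x)|^2\,dt\,dx\lesssim \|w_0\|_{L_x^2}^2,
\end{equation*}
assuming $w_0\in\S(\R^2)$ and concluding by density.

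First I would write, by Fourier inversion and polar coordinates $\xi=r\omega$, $r>0$, $\omega\in\mathbb{S}^1$,
\begin{equation*}
D_x^1U(t)w_0(x)=c\int_0^\infty e^{it\psi(r)}\,r^2\Big(\int_{\mathbb{S}^1}e^{irx\cdot\omega}\widehat{w_0}(r\omega)\,d\omega\Big)\,dr,\qquad \psi(r):=r^3+r.
\end{equation*}
Since $\psi'(r)=3r^2+1\ge1$, the map $r\mapsto\rho=\psi(r)$ is a smooth bijection of $[0,\infty)$ onto itself; substituting $\rho=\psi(r)$ turns the $r$-integral into a one-dimensional inverse Fourier transform in $(\rho,t)$. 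Applying Plancherel in $t$ and changing back to $r$ (so that $d\rho=\psi'(r)\,dr$) gives the identity
\begin{equation*}
\int_{\R}|D_x^1U(t)w_0(x)|^2\,dt = c\int_0^\infty\frac{r^4}{\psi'(r)}\,\Big|\int_{\mathbb{S}^1}e^{irx\cdot\omega}\widehat{w_0}(r\omega)\,d\omega\Big|^2\,dr,
\end{equation*}
the polar-coordinate singularity at $r=0$ causing no trouble since everything is absolutely convergent.

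Next I would integrate this over $x\in Q_0$, use Fubini, and for each fixed $r>0$ rescale $y=rx$: the inner $x$-integral becomes $r^{-2}\int_{rQ_0}|\widehat{g_r\,d\sigma}(y)|^2\,dy$, where $g_r(\omega):=\widehat{w_0}(r\omega)$, $d\sigma$ is arclength measure on $\mathbb{S}^1$, and $rQ_0\subset\{|y|\le r\}$. The decisive input is the $L^2$ bound
\begin{equation*}
\int_{|y|\le R}|\widehat{g\,d\sigma}(y)|^2\,dy\lesssim R\,\|g\|_{L^2(\mathbb{S}^1)}^2,\qquad R>0,
\end{equation*}
a consequence of the curvature of the circle (concretely of the decay $|\widehat{d\sigma}(y)|\lesssim|y|^{-1/2}$) via a $TT^*$/Schur-test argument, the right side even improving to $R^2$ when $R\le1$. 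This yields, for each $r$,
\begin{equation*}
\int_{Q_0}\Big|\int_{\mathbb{S}^1}e^{irx\cdot\omega}\widehat{w_0}(r\omega)\,d\omega\Big|^2\,dx\lesssim \min\{1,r^{-1}\}\int_{\mathbb{S}^1}|\widehat{w_0}(r\omega)|^2\,d\omega,
\end{equation*}
and the whole estimate then reduces to the elementary inequality $\tfrac{r^4}{\psi'(r)}\min\{1,r^{-1}\}\lesssim r$, i.e. to $r^2\lesssim\psi'(r)=3r^2+1$, together with the polar form of Plancherel $\|w_0\|_{L^2}^2=c\int_0^\infty\int_{\mathbb{S}^1}|\widehat{w_0}(r\omega)|^2\,d\omega\,r\,dr$.

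I expect the circle $L^2$-restriction bound of the third step to be the only real obstacle: it is the sole place where two-dimensional geometry enters, and it is indispensable, since the cruder estimate obtained by pulling the $x$-integral inside through Cauchy--Schwarz would force the false inequality $r^3\lesssim\psi'(r)$. Everything else is bookkeeping: the change of variables is global because $\psi'\ge1$ (the convenient role of the lower-order term $|\xi|$, though $\psi'(r)\gtrsim r^2$ would already suffice for this estimate), and Plancherel in $t$ is exact.
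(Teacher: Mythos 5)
Your argument is correct, but it takes a genuinely different route from the paper: the paper proves Theorem \ref{th.1} in two lines by invoking Theorem 4.1 of Kenig--Ponce--Vega \cite{KPV1}, the general local smoothing estimate, which for the present phase (with $|\nabla\phi(\xi)|=3|\xi|^2+1>0$) immediately gives $\|D_x^1U(t)w_0\|_{L^2(Q\times[0,T])}\lesssim\big(\int\frac{|\xi|^2}{3|\xi|^2+1}|\widehat{w_0}(\xi)|^2d\xi\big)^{1/2}\lesssim\|w_0\|_{L^2}$. What you have written is, in effect, a self-contained proof of that cited result in the radial two-dimensional case: translation reduction to a single cube, polar coordinates, the global change of variables $\rho=\psi(r)$ (legitimate since $\psi'\geq 1$), Plancherel in $t$, and a trace-type bound on circles of radius $r$; the final bookkeeping $\frac{r^4}{\psi'(r)}\min\{1,r^{-1}\}\lesssim r$ is right, and your observation that Cauchy--Schwarz alone would demand the false inequality $r^3\lesssim\psi'(r)$ correctly locates where the smoothing gain must come from. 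What the paper's route buys is brevity and generality (no radiality needed); what yours buys is transparency about the mechanism, which is the same one underlying the KPV theorem.

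One caveat on your key lemma $\int_{|y|\leq R}|\widehat{g\,d\sigma}(y)|^2dy\lesssim R\|g\|_{L^2(\mathbb{S}^1)}^2$: the estimate is true (it is the classical Agmon--H\"ormander trace bound), but the justification you sketch is not the one that delivers the sharp power. Running Schur's test on the $TT^*$ kernel $\widehat{d\sigma}(y-y')$ over $B_R\times B_R$ with the decay $|\widehat{d\sigma}(y)|\lesssim|y|^{-1/2}$ only yields $R^{3/2}$, which by your own final computation is insufficient (it would require $r^{5/2}\lesssim\psi'(r)$, false for large $r$). To get the power $R$, apply Schur instead to the composition acting on $L^2(\mathbb{S}^1)$, whose kernel is $\widehat{\chi_{B_R}}(\omega-\omega')$ (or majorize $\chi_{B_R}$ by a bump with rapidly decaying Fourier transform), or use the standard slicing proof of Agmon--H\"ormander; note that this bound in fact requires no curvature of the circle at all, so curvature is not ``indispensable'' here --- the gain comes from the spatial localization to the unit cube. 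With the lemma justified this way (or simply cited), your proof is complete.
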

\begin{proof}
Without loss of generality, we can consider the cubes $Q_{\alpha}= Q := \{x: |x|<1\}$. We have that the gradient of the Fourier symbol $|\nabla \phi(\xi)|=3|\xi|^2+1$ is always positive. Therefore from Theorem 4.1 in \cite{KPV1}, it follows that
\begin{equation}\label{eq2.3}
\|D_x^1U(t)w_0\|_{L^2_{Q_\times [0, T]}}\lesssim \Big(\int\frac{|\xi|^2}{3|\xi|^2+1}|\widehat{w_0}(\xi)|^2d\xi\Big)^{\frac12} \lesssim \|w_0\|_{L_x^2}.
\end{equation}
\end{proof}

Next we present the maximal function estimate. In the following theorem, the Fourier symbol will be $\phi(\xi)=|\xi|^3+|\xi|$, with $\xi\in \R^n$. Even though we only need the 2-dimensional case, this result can be established $n$ dimensions.

\begin{Theorem}\label{th.2}
Let $\{Q_{\alpha}\}_{\alpha\in \Z^n}$ be the mesh of dyadic cubes in $\R^n$. Then for any $s>\frac{3n}4$ and $T>0$, the following estimate holds
\begin{equation}\label{eq2.4}
\Big(\sum_{\alpha\in\Z^n}\sup_{|t|\leq T}\sup_{x\in Q_{\alpha}}|U(t)w_0(x)|^2\Big)^{\frac12}\lesssim (1+T^{\frac{n-1}4})\|w_0\|_{H^s}.
\end{equation}
\end{Theorem}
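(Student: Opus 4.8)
The plan is to prove the maximal function estimate \eqref{eq2.4} by combining a Sobolev embedding in time with a fixed-time pointwise decay estimate for the propagator, after a Littlewood--Paley decomposition. First I would reduce to estimating a single frequency-localized piece: write $w_0 = \sum_j P_j w_0$ where $P_j$ projects onto frequencies $|\xi|\sim 2^j$, and note that for the low frequencies $|\xi|\lesssim 1$ the operator $U(t)$ behaves essentially like a bounded Fourier multiplier composed with a Schr\"odinger-type flow, so those contribute trivially; the work is all in the high-frequency regime $2^j\gtrsim 1$ where the symbol $\phi(\xi)=|\xi|^3+|\xi|$ is genuinely of degree three.

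Next, for the supremum in $t$ over $|t|\le T$, I would use the one-dimensional Sobolev inequality $\sup_{|t|\le T}|g(t)|^2 \lesssim \|g\|_{L^2_T}^2 + \|g\|_{L^2_T}\|\partial_t g\|_{L^2_T}$ (or rather its localized fractional version), applied to $g(t) = U(t)P_j w_0(x)$ for fixed $x$. Since $\partial_t U(t) = -i\phi(\sqrt{-\Delta})\,U(t)$ and $\phi$ is comparable to $2^{3j}$ on the support of $P_j$, this trades the time supremum for $2^{3j/2}$ times spatial expressions at frequency $2^j$. Then, to handle $\sup_{x\in Q_\alpha}$ and the sum over $\alpha$, I would invoke a fixed-time maximal/local smoothing bound for oscillatory integrals with radial phase of the type in \cite{KPV1}: for a radial phase $\phi$ with $|\nabla^2\phi|\sim 2^j$ at frequency $2^j$ in $\R^n$, the fixed-time propagator satisfies a dispersive bound yielding $\big(\sum_\alpha \sup_{x\in Q_\alpha}|U(t)P_j w_0(x)|^2\big)^{1/2} \lesssim 2^{j(n-1)/2}\|P_j w_0\|_{L^2_x}$ up to the time-supremum cost already extracted; here the radial structure of $\phi$ that the authors emphasize (as in \cite{LPS}) is what makes the stationary-phase analysis of the kernel tractable. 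Summing the geometric-type series in $j$, the powers of $2^j$ must combine to $2^{js}$ with $s>3n/4$: the Sobolev-in-time step costs $2^{3j/2}$ split as $2^{3j/4}\cdot 2^{3j/4}$ between the $L^2$ and $\partial_t L^2$ factors via Cauchy--Schwarz, and the dispersive/square-function step is what forces the exact threshold; the factor $(1+T^{(n-1)/4})$ emerges from interpolating the trivial bound (valid for small $T$, or short time intervals after subdividing $[-T,T]$ into $O(1+T)$ unit pieces) against the decay estimate.

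The key steps in order are: (1) Littlewood--Paley decomposition and disposal of low frequencies; (2) the Sobolev embedding in the time variable, using $\partial_t U = -i\phi(\sqrt{-\Delta})U$ to convert $\sup_{|t|\le T}$ into frequency-weighted spatial norms, with Cauchy--Schwarz distributing the weight; (3) the fixed-time square-function/maximal estimate for the radial-phase oscillatory integral operator at dyadic frequency $2^j$, quoted from or adapted from \cite{KPV1}; (4) summation over $j$ with the constraint $s>3n/4$, and the interpolation against the elementary estimate to produce the $T^{(n-1)/4}$ growth. The main obstacle I anticipate is step (3): establishing the correct fixed-time dispersive/local smoothing bound for $e^{it\phi(\sqrt{-\Delta})}$ at a single frequency scale in $n$ dimensions, which requires a careful stationary-phase analysis of the kernel exploiting precisely the radiality of $\phi$ and the nonvanishing of $|\nabla\phi|$ and the Hessian's determinant away from the origin — this is exactly the technical heart that the authors indicate is "similar to \cite{LPS} but simpler here." The rest — the Sobolev-in-time trick and the dyadic summation bookkeeping — is routine once that estimate is in hand.
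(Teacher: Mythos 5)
There is a genuine gap, and it sits precisely at your step (3). The claimed fixed-time bound $\bigl(\sum_\alpha \sup_{x\in Q_\alpha}|U(t)P_j w_0(x)|^2\bigr)^{1/2}\lesssim 2^{j(n-1)/2}\|P_jw_0\|_{L^2}$ is false, and not only at $t=0$: for any fixed $t$ take $w_0=U(-t)g$ with $\hat g$ a bump of height $2^{-jn/2}$ on a block of side $\sim 2^j$, so that $\|w_0\|_{L^2}\sim 1$ while $U(t)P_jw_0=P_jg$ has $\sup_x|P_jg|\sim 2^{jn/2}$; the left-hand side is then at least $2^{jn/2}$, beating the claimed constant by $2^{j/2}$. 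For a single time the best available constant in that square-function bound is the Bernstein-type $2^{jn/2}$, and it is attained. Combining it with the $2^{3j/2}$ which your Sobolev-in-time step necessarily costs (your own accounting, since $\partial_t U=-i\phi(D)U$ and $\phi\sim 2^{3j}$ on the support of $P_j$), the scheme of decoupling the supremum in $t$ from the supremum in $x$ can reach at best roughly $s>\frac n2+\frac32$ (i.e.\ $s>\frac52$ when $n=2$), far above the stated threshold $s>\frac{3n}4$; even with your (false) constant $2^{j(n-1)/2}$ the bookkeeping gives $s>\frac{n+2}2$, not $\frac{3n}4$. The structural reason is that the one-dimensional Sobolev embedding in $t$ discards the joint oscillation in $(x,t)$, which is exactly where the gain in these maximal estimates comes from; your explanation of the factor $(1+T^{\frac{n-1}4})$ by interpolation with a trivial bound is likewise not substantiated.

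The proof the paper intends (it is omitted and referred to \cite{LPS}, which follows the scheme of \cite{KPV1}) is different in its core mechanism: after the dyadic frequency localization one linearizes the double supremum (measurable choices $t(x)\in[-T,T]$ and one point per cube $Q_\alpha$), reduces by duality/$TT^\ast$ to pointwise bounds for the kernels $H_j(x,t)=\int e^{i(x\cdot\xi+t\phi(\xi))}\chi(2^{-j}|\xi|)\,d\xi$, and estimates these by rewriting them, through the radial representation \eqref{eq2.5} and the Bessel asymptotics of Lemma \ref{lema.1}, as one-dimensional oscillatory integrals with phase $t(s^3+s)\pm rs$, controlled by stationary phase and Van der Corput in the various regimes of $|x|$ versus $t2^{2j}$; the exponent $\frac{3n}4$ and the growth $T^{\frac{n-1}4}$ both emerge from these kernel bounds and the Schur-type summation over the cubes. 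Your steps (1) and (4) are compatible with that argument, but step (2)--(3) as proposed would have to be replaced by this kernel analysis; the Sobolev-in-time shortcut cannot produce the sharp result.
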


The proof of this theorem is totally analogous to the corresponding one in  \cite{LPS}, relying heavily on the particular representation of the Fourier transform of radial functions in terms of Bessel
functions. So we omit this
proof here, referring the reader to the fully detailed proof in that paper, observing only that we now need
to consider the phase $\phi(\xi)=|\xi|^3+|\xi|$ (instead of $\phi(\xi)=\epsilon |\xi|^3-|\xi|$ there).

As this representation of the Fourier transform of a radial function will also be required in the remaining part of this section, we gather some properties satisfied by it. The Fourier transform of a radial function $f(|x|)=f(s)$ is still a radial function  and is given by (see \cite{SW})
\begin{equation}\label{eq2.5}
\hat{f}(r) =\hat{f}(|\xi|) = r^{-\frac{n-2}2}\int_0^{\infty}f(s)J_{\frac{n-2}2}(rs)s^{\frac n2}ds,
\end{equation}
where $J_m$ is the Bessel function defined by
\begin{equation}\label{eq2.6}
J_m(r)=\frac{(r/2)^m}{\Gamma(m+1/2)\pi^{\frac12}}\int_{-1}^1e^{irs}(1-s^2)^{m-\frac12}ds, \qquad {\text{for}}\quad m>-\frac12.
\end{equation}
In the following lemma we state some properties of the Bessel function (see \cite{GPW}, \cite{SW} for details).
\begin{Lemma}\label{lema.1}
The Bessel function $J_m(r)$ satisfies the following properties.
\begin{equation}\label{eq2.7}
J_m(r) =O(r^m), \quad {\text{as}}\; r\to 0,
\end{equation}
\begin{equation}\label{eq2.8}
J_m(r) = e^{-ir}\sum_{j=0}^N\alpha_{m,j}r^{-(j+\frac12)}+e^{ir}\sum_{j=0}^N\tilde{\alpha}_{m,j}r^{-(j+\frac12)} +O\big(r^{-(N+\frac32)}\big), \quad {\text{as}}\; r\to +\infty,
\end{equation}
for any $N\in \Z_+$, and
\begin{equation}\label{eq2.9}
r^{-\frac{n-2}2}J_{\frac{n-2}2}(r) = c_n\mathcal{R}(e^{ir}h(r)),
\end{equation}
where $h$ is a smooth function satisfying
\begin{equation}\label{eq2.10}
|\partial_r^kh(r)|\leq c_k(1+r)^{-\frac{n-1}2-k},
\end{equation}
for any integer $k\geq 0$.
\end{Lemma}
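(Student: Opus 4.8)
The plan is to derive all three assertions directly from the integral representation \eqref{eq2.6}, treating \eqref{eq2.7} as an elementary bound and extracting both \eqref{eq2.8} and \eqref{eq2.9}--\eqref{eq2.10} from a single endpoint (stationary phase) analysis of the oscillatory integral
\[
I(r):=\int_{-1}^1 e^{irs}(1-s^2)^{m-\frac12}\,ds .
\]
For \eqref{eq2.7} I would simply note that for $m>-\frac12$ one has $|J_m(r)|\le \frac{(r/2)^m}{\Gamma(m+1/2)\pi^{1/2}}\int_{-1}^1(1-s^2)^{m-\frac12}\,ds$, the integral being finite since $m-\frac12>-1$; the remaining factor is a constant, so $J_m(r)=O(r^m)$ as $r\to0$.

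The heart of the matter is the analysis of $I(r)$, which controls both \eqref{eq2.8} and \eqref{eq2.9}. Since the weight is even I would first write $I(r)=2\,\mathcal{R}\int_0^1 e^{irs}(1-s^2)^{m-\frac12}\,ds$, which shows $I$ (hence $J_m$) is real. Next I would introduce a smooth partition of unity separating the endpoints $s=\pm1$ from the interior. On the interior the amplitude is smooth and compactly supported in $(-1,1)$ and the phase is non-stationary, so repeated integration by parts shows this contribution, call it $R(r)$, is rapidly decreasing together with all its $r$-derivatives. By the symmetry $s\mapsto-s$ the endpoint at $-1$ contributes the complex conjugate of the endpoint at $+1$, so $I(r)=2\,\mathcal{R}\big(K(r)\big)+R(r)$, where $K$ is localized near $s=1$. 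The substitution $s=1-u$, using $(1-s^2)^{m-\frac12}=u^{m-\frac12}(2-u)^{m-\frac12}$ and factoring the phase $e^{irs}=e^{ir}e^{-iru}$, gives $K(r)=e^{ir}G(r)$ with
\[
G(r)=\int_0^\infty e^{-iru}\,u^{m-\frac12}\,\psi(u)\,du ,
\]
where $\psi\in C_c^\infty([0,\infty))$ comes from $(2-u)^{m-\frac12}$ times the cutoff, with $\psi(0)\neq0$.

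The principal obstacle, which I expect to be the only genuinely technical step, is to show that $G$ is a symbol of order $-(m+\frac12)$, i.e. $|\partial_r^kG(r)|\le c_k(1+r)^{-(m+1/2)-k}$. Differentiating under the integral raises the power of $u$, so this reduces to the uniform endpoint estimate $\big|\int_0^\infty e^{-iru}u^{\gamma}\psi(u)\,du\big|\le c(1+r)^{-(\gamma+1)}$ for every $\gamma>-1$. For $r\le1$ this is trivial, and for $r\ge1$ I would use the scaling $u=v/r$ followed by integration by parts to control $\int_0^\infty e^{-iv}v^{\gamma}\psi(v/r)\,dv$ uniformly in $r$; this is the quantitative (uniform-remainder) form of Erd\'elyi's lemma for the Fourier transform of an algebraic singularity against a smooth cutoff.

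With this in hand, \eqref{eq2.9}--\eqref{eq2.10} follow by writing
\[
r^{-m}J_m(r)=\tfrac{(1/2)^m}{\Gamma(m+1/2)\pi^{1/2}}\Big(2\,\mathcal{R}\big(e^{ir}G(r)\big)+R(r)\Big)
\]
and setting $h(r)=c_n^{-1}\tfrac{(1/2)^m}{\Gamma(m+1/2)\pi^{1/2}}\big(2G(r)+e^{-ir}R(r)\big)$ with $c_n$ chosen to normalize; the factor $r^{-m}$ exactly cancels the $r^m$ coming from $J_m$, so $h$ is smooth and bounded at $r=0$, while for large $r$ the symbol bound on $G$ together with the rapid decay of $R$ yields \eqref{eq2.10} with exponent $m+\frac12=\frac{n-1}2$. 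Finally, \eqref{eq2.8} is the term-by-term version of the same computation: Taylor expanding $\psi(u)=\sum_{j=0}^N \frac{\psi^{(j)}(0)}{j!}u^{j}+O(u^{N+1})$ and applying the asymptotic expansion $\int_0^\infty e^{-iru}u^{m-\frac12+j}\psi(u)\,du\sim \Gamma(m+\tfrac12+j)(ir)^{-(m+\frac12+j)}$ produces the coefficients $\alpha_{m,j}$ at $s=1$ and, by conjugation, $\tilde\alpha_{m,j}$ at $s=-1$; multiplying by the prefactor $(r/2)^m$ turns the powers into $r^{-(j+1/2)}$, and the Taylor remainder yields the error $O\big(r^{-(N+3/2)}\big)$.
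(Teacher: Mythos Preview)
The paper does not give its own proof of this lemma: it is stated as a collection of classical facts about Bessel functions, with the reader referred to \cite{GPW} and \cite{SW} for details. There is therefore nothing in the paper to compare your argument against.

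That said, your proposal is a correct and self-contained derivation, and it is essentially the classical route those references follow. The bound \eqref{eq2.7} is immediate from the integral representation as you say. For the large-$r$ behaviour, your reduction to the endpoint integral $G(r)=\int_0^\infty e^{-iru}u^{m-\frac12}\psi(u)\,du$ with $\psi\in C_c^\infty$ and $\psi(0)\neq 0$ is exactly the right move, and the key symbol estimate $|\partial_r^kG(r)|\lesssim (1+r)^{-(m+\frac12)-k}$ is indeed the quantitative Erd\'elyi lemma; the scaling-plus-integration-by-parts argument you outline does establish it. From this both \eqref{eq2.8} (via Taylor expansion of $\psi$ at $u=0$) and \eqref{eq2.9}--\eqref{eq2.10} (with $m=\tfrac{n-2}{2}$, so that $m+\tfrac12=\tfrac{n-1}{2}$) follow as you describe. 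The only point worth making explicit in a written-out version is that the interior remainder $R(r)$ is real (which you use when absorbing it into $\mathcal{R}(e^{ir}h)$); this holds because the interior cutoff can be taken even, so the same parity argument that shows $I(r)\in\R$ applies to $R$.
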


Finally, our last goal is to derive a Strichartz estimate. As usual, we will obtain it from a $T T^*$ argument,
using an $L^1 - L^{\infty}$ decay estimate for the linear propagator together with the conservation of its $L^2$ norm.

We start by obtaining the decay estimate whose proof depends on the Van der Corput lemma.
\begin{Lemma}\label{van-der}
Let $f$ be a real valued $C^2$ function, and $\psi$ a $C^1$ function, defined in $[a, b]$, such that $|f^{''}(\xi)|>1$ for any $\xi\in [a, b]$. Then
\begin{equation}\label{van.2}
\Big|\int_a^be^{i\lambda f(\xi)}\psi(\xi)\, d\xi\Big| \lesssim |\lambda|^{-\frac12}\big(|\psi(b)|+\|\psi^{'}\|_{L^1([a,b])}\big),
\end{equation}
where the implicit constant is independent of $a$ or $b$.
\end{Lemma}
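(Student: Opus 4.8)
The plan is to follow the classical two-step argument for the Van der Corput estimate \eqref{van.2}: first reduce to the case of a trivial amplitude $\psi\equiv 1$ by an integration by parts (summation by parts) moving the derivative off of $\psi$, and then prove the basic oscillatory bound
\[
\sup_{a\le c\le d\le b}\Big|\int_c^d e^{i\lambda f(\xi)}\,d\xi\Big|\lesssim |\lambda|^{-\frac12},
\]
uniformly over all subintervals, with constant depending only on the hypothesis $|f''|>1$.

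For the reduction, write $\psi(\xi)=\psi(b)-\int_\xi^b\psi'(t)\,dt$, substitute into the integral, and interchange the order of integration by Fubini to obtain
\[
\int_a^b e^{i\lambda f(\xi)}\psi(\xi)\,d\xi=\psi(b)\int_a^b e^{i\lambda f(\xi)}\,d\xi-\int_a^b\psi'(t)\Big(\int_a^t e^{i\lambda f(\xi)}\,d\xi\Big)\,dt.
\]
The first term is bounded by $|\psi(b)|$ times the oscillatory bound above, and the second by $\|\psi'\|_{L^1([a,b])}$ times the same supremum; so \eqref{van.2} follows at once from the displayed oscillatory estimate.

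To prove that oscillatory estimate, observe that since $f''$ is continuous and $|f''|>1$ on $[a,b]$, $f''$ has a constant sign, hence $f'$ is strictly monotone and vanishes at most at one point $\xi_0\in[a,b]$ (if $f'$ has no zero, take $\xi_0$ to be the endpoint at which $|f'|$ is smallest). Fix $\delta>0$ and, given $[c,d]\subseteq[a,b]$, split it into $J_1=[c,d]\cap\{|\xi-\xi_0|<\delta\}$ and $J_2=[c,d]\setminus J_1$. On $J_1$ we estimate the integral trivially by $|J_1|\le 2\delta$. On $J_2$, which consists of at most two subintervals, the mean value theorem together with $|f''|>1$ gives $|f'(\xi)|\ge\delta$, so we integrate by parts writing $e^{i\lambda f}=\frac{1}{i\lambda f'}\frac{d}{d\xi}e^{i\lambda f}$: the boundary terms are $\lesssim(\lambda\delta)^{-1}$, and the remaining integral equals $\lambda^{-1}\int_{J_2}\frac{|f''|}{(f')^2}\,d\xi$, which, since $1/f'$ is monotone on each component of $J_2$, is $\lambda^{-1}$ times the total variation of $1/f'$ over $J_2$, hence again $\lesssim(\lambda\delta)^{-1}$. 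Adding the two contributions gives $\big|\int_c^d e^{i\lambda f}\big|\lesssim \delta+(\lambda\delta)^{-1}$, and optimizing with $\delta=|\lambda|^{-1/2}$ yields the claimed $|\lambda|^{-1/2}$ bound uniformly in $c,d,a,b$.

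The main obstacle is purely one of bookkeeping in this last step: one must be careful that the implicit constant depends only on the lower bound $|f''|>1$ and not on the interval or on $f$ itself. This is exactly where the sign-definiteness of $f''$ (equivalently, the monotonicity of $f'$) is essential, as it is what allows the total variation of $1/f'$ over $J_2$ to be controlled by a fixed multiple of $1/\delta$ rather than summing uncontrollably over many oscillations; all the remaining estimates are elementary and the bound is automatic when $|\lambda|$ is small, since then $\delta=|\lambda|^{-1/2}\ge 1$ absorbs everything.
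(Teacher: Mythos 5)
Your proof is correct: the reduction to constant amplitude via $\psi(\xi)=\psi(b)-\int_\xi^b\psi'(t)\,dt$ and Fubini, followed by the $\delta$-splitting around the (at most one) critical point of $f'$ and integration by parts on the complement with the total-variation bound for $1/f'$, is exactly the classical Van der Corput argument with the stated dependence only on $|f''|>1$, uniformly in $a$, $b$ and the subinterval. The paper itself states this lemma without proof, treating it as a standard fact from the oscillatory-integral literature (cf. \cite{KPV1}, \cite{SW}), so there is no internal proof to compare against; your argument is the standard one and fills that gap correctly, with only the cosmetic remark that the post-integration-by-parts term is \emph{bounded by} $|\lambda|^{-1}\int_{J_2}|f''|/(f')^2$ rather than equal to it.
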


The crucial decay estimate now follows.
\begin{Lemma}\label{lema.4}
Let $0\leq \beta\leq 1$, then for all $t\in \R$ and $x\in \R^2$, one has
\begin{equation}\label{dec.1}
\|D_x^{\beta}U(t)w_0\|_{L_x^{\infty}}\lesssim t^{-\frac{2+\beta}3}\|w_0\|_{L^1}.
\end{equation}
\end{Lemma}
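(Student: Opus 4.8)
The plan is to reduce the estimate to an oscillatory integral bound obtained via the Van der Corput lemma (Lemma \ref{van-der}), exploiting the radial structure of the phase $\phi(\xi)=|\xi|^3+|\xi|$ together with the Bessel function representation of the Fourier transform of radial functions gathered in Lemma \ref{lema.1}. Writing $U(t)w_0 = K_t * w_0$ with $\widehat{K_t}(\xi) = e^{it\phi(\xi)}|\xi|^{\beta}$, it suffices by Young's inequality to prove the kernel bound $\|D_x^{\beta}K_t\|_{L_x^{\infty}} \lesssim t^{-(2+\beta)/3}$. Since $|\xi|^{\beta}e^{it\phi(\xi)}$ is radial in $\xi\in\R^2$, formula \eqref{eq2.5} with $n=2$ gives
\begin{equation*}
(D_x^{\beta}K_t)(x) = c\int_0^{\infty} e^{it(\rho^3+\rho)}\rho^{\beta} J_0(|x|\rho)\,\rho\, d\rho,
\end{equation*}
and using the representation \eqref{eq2.9}--\eqref{eq2.10} for $\rho^{-\frac{n-2}2}J_{\frac{n-2}2}$ (here $n=2$, so $J_0(|x|\rho)=c\,\mathcal{R}(e^{i|x|\rho}h(|x|\rho))$ with $h$ satisfying the decay \eqref{eq2.10}), we are left with controlling, uniformly in $|x|$, integrals of the form
\begin{equation*}
I(x,t)=\int_0^{\infty} e^{it(\rho^3+\rho)\pm i|x|\rho}\,\rho^{1+\beta}\,h(|x|\rho)\, d\rho.
\end{equation*}

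Next I would rescale $\rho = t^{-1/3}\lambda$ to pull out the dispersive gain: the phase becomes $t^{-1/3}$ times something, but more precisely setting $\rho=t^{-1/3}\lambda$ turns the integral into $t^{-(2+\beta)/3}$ times $\int_0^\infty e^{i(\lambda^3 + t^{2/3}\lambda \pm |x|t^{-1/3}\lambda)}\lambda^{1+\beta}h(|x|t^{-1/3}\lambda)\,d\lambda$, so after absorbing the clean prefactor $t^{-(2+\beta)/3}$ it remains to bound the oscillatory integral in $\lambda$ uniformly in the remaining parameters $a:=t^{2/3}\ge 0$ and $b:=|x|t^{-1/3}\in\R$. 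The phase is $f(\lambda)=\lambda^3+(a\pm b)\lambda$ with $f''(\lambda)=6\lambda$. One then splits the $\lambda$-integral dyadically (or into the region near the origin and away from it). On each dyadic piece $\lambda\sim 2^j$ one has $|f''|\sim 2^j$, so after rescaling that piece to unit frequency Lemma \ref{van-der} yields a factor $2^{-j/2}$; combined with the amplitude $\lambda^{1+\beta}$ of size $2^{j(1+\beta)}$ and the decay $|h(b\lambda)|$ and the lengths of the $\lambda$-intervals, one has to check the $j$-sum converges and is bounded independently of $a,b$. The decay \eqref{eq2.10} of $h$ and the oscillation together must be enough to tame the amplitude growth $\lambda^{1+\beta}$; the range $0\le\beta\le1$ is exactly what keeps this summable (this is where the restriction enters), and the condition $|f''|>1$ needed for Lemma \ref{van-der} is arranged by the dyadic rescaling together with an elementary direct estimate on the single low-frequency piece $\lambda\lesssim 1$.

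The main obstacle is the uniformity in the two parameters $a=t^{2/3}$ and $b=|x|t^{-1/3}$: the stationary point of $f$ may or may not be present depending on the sign of $a\pm b$, and one must ensure the Van der Corput bound together with the $h$-decay gives a constant that does not blow up as $b\to\infty$ (large $|x|$) or as the stationary point migrates. This is handled by noting $h(b\lambda)$ decays like $(1+b\lambda)^{-1/2}$, which compensates precisely for the worst-case contributions where $|f''|$ is small; partitioning into $|f''|\gtrsim 1$ and $|f''|\lesssim 1$ and using $\|\psi'\|_{L^1}$ control in \eqref{van.2} for the amplitude $\psi(\lambda)=\lambda^{1+\beta}h(b\lambda)$ closes the argument. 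This is entirely parallel to the corresponding decay estimate in \cite{LPS}, the only change being the sign of the lower-order term $+|\xi|$ in the phase, which does not affect $f''$.
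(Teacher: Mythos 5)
Your overall reduction is the same as the paper's: Young's inequality, the radial representation \eqref{eq2.5} with \eqref{eq2.9}--\eqref{eq2.10}, the rescaling $\rho=t^{-1/3}\lambda$ that produces the clean factor $t^{-(2+\beta)/3}$, and the reduction to a bound, uniform in the remaining parameters, for an oscillatory integral with phase $f(\lambda)=\lambda^3+(a\pm b)\lambda$. Your treatment of the neighbourhood of the stationary point also matches the paper's term $M^1_\beta$: Van der Corput gives a gain of order $(r-\gamma)^{-1/4}$ there, the amplitude is of size $(r-\gamma)^{(1+\beta)/2}$, and the decay of $h$ at the stationary point contributes $(1+r(r-\gamma)^{1/2})^{-1/2}$, the product being bounded precisely because $\beta\le1$.

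The gap is in the region away from the stationary point, which you propose to handle by the same Lemma \ref{van-der} on dyadic blocks. On a block $\lambda\sim 2^j$ the second-derivative gain is only $2^{-j/2}$, the amplitude is $\sim 2^{j(1+\beta)}$, and the best decay available from \eqref{eq2.10} with $n=2$ is $|h(b\lambda)|\lesssim(1+b2^j)^{-1/2}$; hence each block is bounded by (and in general no better than) $c\,2^{j(1/2+\beta)}(1+b2^j)^{-1/2}$, which is $\gtrsim b^{-1/2}2^{j\beta}$ for $b2^j\gtrsim1$ and $\gtrsim 2^{j(1/2+\beta)}$ when $b$ is small (e.g.\ $x=0$). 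The sum over $j$ therefore diverges for every $\beta\in[0,1]$; the assertion that $0\le\beta\le1$ "keeps this summable" is not correct, and the single-region variant (applying \eqref{van.2} on all of $\{\lambda\gtrsim1\}$) fails as well, since $\|\frac{d}{d\lambda}(\lambda^{1+\beta}h(b\lambda))\|_{L^1(1,\infty)}=\infty$ in general. The missing ingredient is non-stationary phase: away from an $O((r-\gamma)^{1/2})$-neighbourhood of the stationary point one has $|f'(\lambda)|=|3\lambda^2-(r-\gamma)|\gtrsim\lambda^2$, and repeated integration by parts (two steps, producing decay of order $\lambda^{\beta-3}$, together with a trivial bound for $\lambda\lesssim1$) gives a bound uniform in the parameters. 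This is exactly how the paper treats its term $M^2_\beta$, reserving Van der Corput solely for the cut-off piece around $u\sim(r-\gamma)^{1/2}$; as written, your dyadic Van der Corput scheme does not close.
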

\begin{proof}
The idea of the proof of this lemma is very similar to the one used in \cite{LPS}. Using the Fourier transform and Young's inequality we have
\begin{equation}\label{eq2.25}
\|D_x^{\beta}U(t)w_0\|_{L_x^{\infty}}\leq \|(|\cdot|^{\beta}e^{it\phi})^{\vee}\|_{L_x^{\infty}}\|w_0\|_{L^1}.
\end{equation}
The formula for the Fourier transform of a radial function \eqref{eq2.5} and the property \eqref{eq2.9} yield
\begin{equation}\label{eq2.26}
\begin{split}
(|\xi|^{\beta}e^{it\phi(\xi)})^{\vee}(x) &=\int_0^{+\infty}s^{\beta}e^{it(s^3+s)}J_0(rs)s\,ds\\
&=\int_0^{+\infty}s^{\beta+1}e^{it(s^3+s)}e^{irs}h(rs)ds +\int_0^{+\infty}s^{\beta+1}e^{it(s^3+s)}e^{-irs}\overline{h(rs)}ds\\
&=A_{\beta}(r,t) +B_{\beta}(r,t),
\end{split}
\end{equation}
where $r=|x|$. Without loss of generality we consider $t\geq 0$. The estimates for both terms in \eqref{eq2.26} follow the same techniques and the
$A_{\beta}(r,t)$ case is actually simpler, so we consider only the second term, $B_{\beta}(r,t)$. Performing the change of variables $u =t^{\frac13}s$, we have
\begin{equation}\label{eq2.27}
\begin{split}
B_{\beta}(r,t) &= t^{-\frac{2+\beta}3}\int_0^{+\infty}u^{\beta+1}e^{i(u^3+t^{\frac23}u-r t^{-\frac13}u)}\overline{h(rt^{-\frac13}u)}du\\
&=:t^{-\frac{2+\beta}3}M_{\beta}(rt^{-\frac13}),
\end{split}
\end{equation}
where the function $M_{\beta}(r)$ is defined as
\begin{equation*}
M_{\beta}(r) =\int_0^{+\infty}u^{\beta+1}e^{if_r(u)}\overline{h(ru)}du,
\end{equation*}
with the phase function, for $\gamma = t^{\frac23}$, given by
\begin{equation*}
f_r(u) = u^3-(r-\gamma)u.
\end{equation*}

The proof of the lemma then reduces to proving that
\begin{equation}\label{eq2.28}
\sup_{r\geq 0}|M_{\beta}(r)|\lesssim 1, \qquad \; \forall\; \beta\in[0, 1].
\end{equation}

We now use standard oscillatory integral methods to obtain \eqref{eq2.28}. By the stationary phase principle, the main contribution to the integral comes from
the points where the derivative of the phase vanishes, i.e.
$$
3u^2 = r-\gamma.
$$
So we isolate the integral around these points, with a smooth cut-off function $\psi:\R \to \R$, $0\leq\psi\leq 1$, rescaled around
$u=\sqrt{\frac{r-\gamma}{3}}$, such that
$$
\mbox{supp}\, \psi \subset \{ u : |3u^2-(r-\gamma)| \leq \frac{r-\gamma}{2} \},
$$
and
$$\psi(u)=1 \qquad \mbox{for} \qquad |3u^2-(r-\gamma)| \leq \frac{r-\gamma}{4}.$$
Notice, as this will be important below, that the length of the support is of the order $(r-\gamma)^{1/2}$ and that each derivative of $\psi$ produces a factor
of $(r-\gamma)^{-1/2}$.
Splitting the integral $M_{\beta}(r)$ into two terms, with this cut-off function, we obtain
$$
|M_{\beta}(r)| \leq |M^1_{\beta}(r)|+|M^2_{\beta}(r)|,
$$
where
$$
M^1_{\beta}(r)=\int_0^{+\infty}u^{\beta+1}e^{if_r(u)}\overline{h(ru)} \,\psi(u) du,
$$
and
$$
M^2_{\beta}(r)=\int_0^{+\infty}u^{\beta+1}e^{if_r(u)}\overline{h(ru)} (1-\psi(u)) du.
$$
When $r-\gamma \leq 0$, no stationary phase points exist and no splitting is required, with the whole integral reducing
to the $M^2_{\beta}(r)$ case. This is, in fact, what happens for the $A_\beta(r,t)$ term above, whose phase is given by $f_r(u) = u^3+(r+\gamma)u$ and therefore
has no stationary phase points in the domain of integration $u \in (0,\infty)$.

The bound for the non-stationary phase term $M^2_{\beta}(r)$ is easily obtained integrating by parts in the usual way, using the non-vanishing derivative of the phase (see \cite{LPS} for details).

As for $M^1_{\beta}(r)$, we now use Van der Corput's lemma to obtain an accurate bound for the integral around the stationary phase points. We start by
writing $f_r(u)=(r-\gamma)^{1/2}\Big( \frac{u^3}{(r-\gamma)^{1/2}}-(r-\gamma)^{1/2}u\Big)$ and note that on the support of $\psi$ we have $u \sim (r-\gamma)^{1/2}$
so that
$$
\Big( \frac{u^3}{(r-\gamma)^{1/2}}-(r-\gamma)^{1/2}u\Big)''=\frac{6u}{(r-\gamma)^{1/2}} \gtrsim 1,
$$
therefore
\begin{equation}
\label{vdc}
|M^1_{\beta}(r)| \lesssim  \frac{1}{(r-\gamma)^{1/4}} \Big(\|h(r \cdot) (\cdot)^{\beta +1} \psi(\cdot)\|_{L^{\infty}} +
\|\frac{d}{du}\big(h(r \cdot) (\cdot)^{\beta +1} \psi(\cdot)\big)\|_{L^1}
\Big).
\end{equation}
Using the estimate for $h$ and its derivatives \eqref{eq2.10}, the fact that on the support of $\psi$ we have $u \sim (r-\gamma)^{1/2}$, and the observations
made earlier about the derivatives of $\psi$ and the length of its support, it is easy to check that all the terms on the RHS of the inequality \eqref{vdc} are
equally bounded by a constant multiple of
$$
\frac{(r-\gamma)^{(\beta+1)/2}}{(r-\gamma)^{1/4}(1+r(r-\gamma)^{1/2})^{1/2}},
$$
yielding
$$|M^1_{\beta}(r)|\lesssim 1,$$
with a constant depending on $\beta$.
\end{proof}

Having obtained the decay estimate, we can now proceed to use it as one of the main ingredients in the proof of the Strichartz estimate for the unitary
linear propagator.

\begin{Theorem}\label{th.3}
Let $T>0$ and $0\leq \delta <\frac12$. Then, for all $w_0\in L^2(\R^2)$
\begin{equation}\label{eq2.23}
\|D_x^{\delta}U w_0\|_{L_T^{q_{\delta}}L_x^{\infty}}\lesssim \|w_0\|_{L^2},
\end{equation}
where $q_{\delta} = \frac3{1+\delta}$.
\end{Theorem}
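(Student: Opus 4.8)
The plan is to obtain the Strichartz estimate \eqref{eq2.23} from the decay estimate of Lemma \ref{lema.4} via the standard $TT^*$ method, combined with the Hardy--Littlewood--Sobolev inequality. First I would fix $\delta \in [0,1/2)$ and consider the operator $w_0 \mapsto D_x^{\delta} U(t) w_0$. Since $U(t)$ is unitary on $L^2$, it suffices to show that the dual operator composed with its adjoint, namely $F \mapsto \int_{\R} D_x^{2\delta} U(t) U(-t') F(\cdot,t')\, dt' = \int_{\R} D_x^{2\delta} U(t-t') F(\cdot,t')\, dt'$, is bounded from $L_t^{q_{\delta}'} L_x^1$ to $L_t^{q_{\delta}} L_x^{\infty}$, where $q_{\delta}' = 3/(2-\delta)$ is the conjugate exponent. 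By Minkowski's inequality and Lemma \ref{lema.4} applied with $\beta = 2\delta \in [0,1)$ (valid precisely because $\delta < 1/2$), the kernel bound $\|D_x^{2\delta} U(t-t') g\|_{L_x^{\infty}} \lesssim |t-t'|^{-(2+2\delta)/3} \|g\|_{L^1}$ reduces the estimate to the one-dimensional fractional integration bound
\begin{equation*}
\Big\| \int_{\R} |t-t'|^{-\frac{2+2\delta}{3}} \|F(\cdot,t')\|_{L_x^1}\, dt' \Big\|_{L_t^{q_{\delta}}} \lesssim \big\| \|F(\cdot,t')\|_{L_x^1} \big\|_{L_t^{q_{\delta}'}}.
\end{equation*}

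Next I would verify that the exponents match the Hardy--Littlewood--Sobolev inequality. Writing the singularity exponent as $1 - a$ with $a = 1 - (2+2\delta)/3 = (1-2\delta)/3$, HLS gives boundedness from $L^{q_{\delta}'}_t$ to $L^{q_{\delta}}_t$ provided $\tfrac{1}{q_{\delta}} = \tfrac{1}{q_{\delta}'} - a$, i.e. $\tfrac{1}{q_{\delta}} + \tfrac{1}{q_{\delta}} = 1 + a$ after using $1/q_{\delta}' = 1 - 1/q_{\delta}$; substituting $q_{\delta} = 3/(1+\delta)$ gives $2(1+\delta)/3 = 1 + (1-2\delta)/3 = (4-2\delta)/3$, which indeed holds. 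One also checks the admissibility conditions $1 < q_{\delta}' < q_{\delta} < \infty$ and $0 < a < 1$, all of which follow from $0 \le \delta < 1/2$. Applying HLS then yields the dual estimate, and by the standard duality/$TT^*$ argument (the $T^*T$ form, since we want an estimate on $T = D_x^{\delta}U$ itself) this is equivalent to \eqref{eq2.23}. Finally, since we work on a finite time interval $[0,T]$ one may either run the argument directly on $[0,T]$ or restrict the global estimate; the constant is uniform in $T$, although a $T$-dependent version also suffices for the local well-posedness application.

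I do not expect any serious obstacle here: the argument is classical and the decay rate $|t|^{-(2+2\delta)/3}$ from Lemma \ref{lema.4} is exactly calibrated so that the HLS exponents close at $q_{\delta} = 3/(1+\delta)$. The only point requiring a little care is the bookkeeping of exponents and the verification that $\beta = 2\delta$ stays in the admissible range $[0,1]$ of Lemma \ref{lema.4}, which forces the strict restriction $\delta < 1/2$; at the endpoint $\delta = 1/2$ one would need the $\beta = 1$ decay but the resulting exponent $q_{\delta} = 2$ corresponds to the forbidden endpoint case $a = 0$ of HLS, which is why it is excluded. A secondary technical point is justifying the reduction to a scalar convolution inequality, which is handled cleanly by Minkowski's integral inequality applied to the $L_x^{\infty}$ norm of the operator output.
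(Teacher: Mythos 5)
Your proposal is correct and follows essentially the same route as the paper: the $L^1$--$L^{\infty}$ decay of Lemma \ref{lema.4} with $\beta=2\delta$, a $TT^*$ reduction, and the one-dimensional Hardy--Littlewood--Sobolev inequality in time, with the exponent bookkeeping closing exactly at $q_{\delta}=3/(1+\delta)$ and $\delta<1/2$ forced by the HLS admissibility. The paper merely inserts an intermediate interpolation with the $L^2$ conservation before specializing to the endpoint $\theta=1$, which is the case you treat directly, so the arguments coincide in substance.
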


\begin{proof} Proofs of these types of estimates, as consequence of the $L^2$ conservation and the $L^1 - L^{\infty}$ decay estimates for the corresponding linear
evolution groups, are now a standard procedure (see \cite{KT} or \cite{KPV1} for example).

We start by interpolating
$$\|U(t)w_0\|_{L_x^2} = \|w_0\|_{L^2},$$
and
$$\|D_x^{\beta}U(t)w_0\|_{L_x^{\infty}}\lesssim t^{-\frac{2+\beta}3}\|w_0\|_{L^1},$$
to obtain
$$\|D_x^{\beta \theta}U(t)w_0\|_{L_x^{p}}\lesssim t^{-\frac{2+\beta}3 \theta}\|w_0\|_{L^{p'}},$$
for $0\leq \theta \leq 1$, $\frac1p = \frac12 - \frac{\theta}{2}$ and $1=\frac 1p+ \frac{1}{p'}$.

Then, using a $T T^*$ argument on the dual formulation and the Hardy-Littlewood-Sobolev inequality in the $t$ variable, we get
$$\|D_x^{\beta \theta/2}U(t)w_0\|_{L^q_t L_x^{p}}\lesssim \|w_0\|_{L^2},$$
for $0\leq \theta \leq 1$, $\frac1p = \frac12 - \frac{\theta}{2}$, $\frac2q=\frac{2+\beta}{3}\theta$.

Finally, choosing the endpoint case $\theta=1$ and writing $\delta=\beta/2$ we obtain the desired inequality. Notice that, at $\theta=1$, the case $\beta=1$, i.e. $\delta=\frac12$, becomes unattainable, due to the Hardy-Littlewood-Sobolev restriction $\frac{2+\beta}{3}\theta <1$. This corresponds to the known $L^2_t L_x^{\infty}$ Strichartz
endpoint problem.
\end{proof}

Before leaving this section we record two more results that will be required in the sequel. The first one is the Leibniz's rule for fractional derivatives whose proof can be found in \cite{KPV4}.
\begin{Lemma}\label{leibniz}
Let $0<m<1$ and $1<p<\infty$. Then for any $f,\, g:\R^n\to \C$, one has
\begin{equation}\label{leb.1}
\|D_x^m(fg)-fD_x^mg-gD_x^mf\|_{L_x^p}\lesssim \|g\|_{L_x^{\infty}}\|D_x^mf\|_{L_x^p}.
\end{equation}
\end{Lemma}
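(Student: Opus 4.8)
\emph{Proof proposal.} This is the Kato--Ponce fractional Leibniz (commutator) estimate, and I would approach it either through a direct singular-integral representation of $D_x^m$ or through bilinear Fourier multiplier techniques; both illuminate the mechanism. Write $B(f,g):=D_x^m(fg)-fD_x^mg-gD_x^mf$. Since $0<m<1$, one has the absolutely convergent representation $D_x^mh(x)=c_{n,m}\int_{\R^n}\frac{h(x)-h(x-y)}{|y|^{n+m}}\,dy$, and a short computation gives the exact identity
\[
B(f,g)(x)=-c_{n,m}\int_{\R^n}\frac{\big(f(x)-f(x-y)\big)\big(g(x)-g(x-y)\big)}{|y|^{n+m}}\,dy.
\]
The product of two differences is the crucial structural feature: one difference will carry the regularity of $f$, while the other is simply dominated by $2\|g\|_{L_x^{\infty}}$. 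The plan is then to localize the $y$-integral to dyadic shells $|y|\sim 2^{\ell}$ and sum in $\ell$; after the crude bound on the $g$-difference, the $\ell$-th piece is comparable to a Littlewood--Paley projection of $D_x^mf$ at frequency $\sim 2^{-\ell}$ modulated by a maximal average of $f$, so a square-function estimate together with the Fefferman--Stein vector-valued maximal inequality (this is where $1<p<\infty$ enters) collapses the whole sum to $\|g\|_{L_x^{\infty}}\|D_x^mf\|_{L_x^p}$.

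Equivalently, one may work on the Fourier side: $B(f,g)$ is the bilinear multiplier with symbol $\sigma(\xi,\eta)=|\xi+\eta|^m-|\xi|^m-|\eta|^m$. A paraproduct decomposition $f=\sum_j P_jf$, $g=\sum_k P_kg$ splits the analysis into the regimes $|\xi|\ll|\eta|$, $|\eta|\ll|\xi|$ and $|\xi|\sim|\eta|$. The algebraic point is that in the first two regimes the cancellation $\big||\xi+\eta|^m-(\text{the larger of }|\xi|^m,|\eta|^m)\big|\lesssim(\text{the smaller of }|\xi|,|\eta|)^m$ turns $\sigma$ into a Coifman--Meyer symbol whose size is only the \emph{smaller} frequency to the power $m$; together with $m>0$ this yields geometrically summable series in the low-frequency index, and, pairing the high-frequency factor with $L_x^{\infty}$ and the low-frequency factor with the square function of $D_x^mf$, one again arrives at $\|g\|_{L_x^{\infty}}\|D_x^mf\|_{L_x^p}$.

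The genuinely delicate part --- and where I expect the main work to lie --- is the high--high interaction $|\xi|\sim|\eta|$, in which no gain of this kind is available (indeed $\sigma$ there has size comparable to the common input frequency to the power $m$, even for the commutator). Here one uses instead that the \emph{output} frequency is dominated by the common input frequency, dualizes against $L_x^{p'}$ and the Littlewood--Paley square function, and exploits once more that $m>0$ to sum the resulting double series. Finally, since every norm occurring in \eqref{leb.1} is invariant under the scaling $f\mapsto f(\lambda\,\cdot)$, $g\mapsto g(\lambda\,\cdot)$ (both sides carry the factor $\lambda^{m-n/p}$), it suffices in practice to prove the inhomogeneous variant $\|B(f,g)\|_{L_x^p}\lesssim\|g\|_{L_x^{\infty}}\big(\|f\|_{L_x^p}+\|D_x^mf\|_{L_x^p}\big)$, which the dyadic arguments above produce more naturally, and then let $\lambda\to\infty$ to absorb the lower-order term $\|f\|_{L_x^p}$.
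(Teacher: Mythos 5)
The paper itself gives no proof of this lemma: it is quoted directly from Kenig--Ponce--Vega \cite{KPV4} (it is their commutator/Leibniz estimate, Theorem A.12 there), so any self-contained argument is necessarily "different from the paper". Your commutator identity
\[
D_x^m(fg)-fD_x^mg-gD_x^mf=-c_{n,m}\int_{\R^n}\frac{\bigl(f(x)-f(x-y)\bigr)\bigl(g(x)-g(x-y)\bigr)}{|y|^{n+m}}\,dy
\]
is correct, and your second (paraproduct) route is essentially the standard modern proof: the cancellation of the symbol $|\xi+\eta|^m-|\xi|^m-|\eta|^m$ in the unbalanced regimes, the delicacy of the high--high regime, the use of output-frequency domination and of $m>0$ to sum there, and the final scaling reduction to the inhomogeneous estimate are all the right ingredients. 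One caveat even there: in the regime where $g$ carries the high frequency, "pairing the high-frequency factor with $L^\infty$" is not just H\"older plus Fefferman--Stein; that piece is the paraproduct $\sum_j S_jh\,P_jg$ with $h\sim D_x^mf$, whose $L^p$ bound with only $\|g\|_{L^\infty}$ (in fact $\|g\|_{BMO}$) on the second factor requires the Carleson-measure/paraproduct theorem, a genuine extra input your sketch glosses over.

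The first route, however, has a real gap exactly at the step you wave through: once you dominate $|g(x)-g(x-y)|$ by $2\|g\|_{L^\infty}$ you have discarded all cancellation in $y$, and you are left with $\|g\|_{L^\infty}\int |f(x)-f(x-y)|\,|y|^{-n-m}dy$. Running your dyadic-shell plus Littlewood--Paley analysis on this quantity gives, after summing the shells (where $0<m<1$ is indeed used), a pointwise bound of the form $c\,\|g\|_{L^\infty}\sum_j 2^{jm}M(P_jf)(x)$ --- an $\ell^1$ sum over frequencies. Its $L^p$ norm is a Besov $B^{m}_{p,1}$-type quantity, which is \emph{not} controlled by $\|D_x^mf\|_{L^p}$ (that corresponds to the $\ell^2$, Triebel--Lizorkin, square function), and no application of Fefferman--Stein converts the $\ell^1$ sum into the $\ell^2$ one; indeed the operator $f\mapsto\int|f(x)-f(x-y)|\,|y|^{-n-m}dy$ is simply not bounded from $\{f:\,D_x^mf\in L^p\}$ to $L^p$, as lacunary examples show, and adding the lower-order term $\|f\|_{L^p}$ (your scaling remark) does not help since the failure occurs across all frequencies. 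To close the argument one must keep the $g$-increment, i.e.\ its frequency localization or an $L^2(dy)$ square-function pairing, inside the estimate --- which is precisely what the paraproduct route (or the proof in \cite{KPV4}) does. So: identity and second strategy sound though schematic, first strategy as stated fails at the "crude bound on the $g$-difference" step.
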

The second result is a well known theorem in harmonic analysis, about the $L^p$ boundedness of the Riesz transform, which states the following.
\begin{Lemma}\label{lem.rz}
Let $R_l$, $l=1,2$ be the Riesz transform. Then, for $1<p<\infty$
\begin{equation}\label{rz.1}
\|R_lf\|_{L_x^p} \lesssim \|f\|_{L_x^p}.
\end{equation}
\end{Lemma}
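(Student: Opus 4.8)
The statement is the classical $L^p$ boundedness of the Riesz transforms, and the plan is to establish it through the Calderón--Zygmund theory of singular integral operators. The starting point will be the observation that $R_l$ is a Fourier multiplier with symbol $m_l(\xi)=-i\xi_l/|\xi|$, which satisfies $|m_l(\xi)|\le 1$ for every $\xi\ne 0$. Hence, by Plancherel's theorem, $R_l$ is bounded on $L^2$:
\begin{equation*}
\|R_lf\|_{L_x^2}=\|m_l\widehat{f}\|_{L_x^2}\le \|\widehat{f}\|_{L_x^2}=\|f\|_{L_x^2}.
\end{equation*}
This settles the endpoint $p=2$ and provides the seed from which the full range $1<p<\infty$ will be extracted.

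Next I would identify the convolution kernel of $R_l$. A standard computation shows that, as a tempered distribution, $m_l$ is the Fourier transform of the principal-value kernel
\begin{equation*}
K_l(x)=c_n\,\mathrm{p.v.}\,\frac{x_l}{|x|^{n+1}},
\end{equation*}
so that $R_lf=K_l*f$ for $f$ smooth and away from the diagonal (in our application $n=2$, but the argument is dimension independent). The two kernel properties required by the Calderón--Zygmund machinery are then immediate from this explicit formula: the size bound $|K_l(x)|\lesssim |x|^{-n}$ follows from $|x_l|\le|x|$, while a direct differentiation gives $|\nabla K_l(x)|\lesssim |x|^{-(n+1)}$. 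The latter yields, through the mean value theorem, the Hörmander regularity condition
\begin{equation*}
\int_{|x|\ge 2|y|}|K_l(x-y)-K_l(x)|\,dx\lesssim 1,
\end{equation*}
uniformly in $y\ne 0$: indeed, for $|x|\ge 2|y|$ one has $|K_l(x-y)-K_l(x)|\lesssim |y|\,|x|^{-(n+1)}$, and integrating the right-hand side over the region $|x|\ge 2|y|$ produces a constant independent of $y$.

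With the $L^2$ bound and the Hörmander condition in hand, the Calderón--Zygmund theorem applies and shows that $R_l$ is of weak type $(1,1)$. Interpolating this weak-type estimate against the $L^2$ bound by the Marcinkiewicz interpolation theorem delivers the strong inequality \eqref{rz.1} for $1<p\le 2$. The remaining range $2\le p<\infty$ I would obtain by duality: since $m_l$ is purely imaginary and odd, the $L^2$ adjoint of $R_l$ is $-R_l$, so boundedness of $R_l$ on $L^{p'}$ for $1<p'\le 2$ transfers to boundedness on $L^p$ for the conjugate exponent $2\le p<\infty$. The only genuinely technical step is the verification of the Hörmander condition, and even this reduces to the elementary gradient estimate on the explicit kernel $K_l$; all the remaining ingredients are direct invocations of standard theorems.
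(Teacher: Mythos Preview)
Your proof is correct and follows the standard Calder\'on--Zygmund route: $L^2$ boundedness from Plancherel, the H\"ormander condition from the explicit gradient bound on the kernel $K_l(x)=c_n\,\mathrm{p.v.}\,x_l/|x|^{n+1}$, weak $(1,1)$ from the Calder\'on--Zygmund theorem, strong $L^p$ for $1<p\le 2$ by Marcinkiewicz interpolation, and the range $2\le p<\infty$ by duality. Each step is sound.

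The paper, however, does not prove this lemma at all. It simply records it as ``a well known theorem in harmonic analysis'' and states the inequality without argument, implicitly deferring to standard references such as \cite{SW}. So your proposal is not a different route from the paper's proof---it is a complete proof where the paper has none. What you gain is a self-contained treatment; what the paper gains is brevity, since the result is entirely classical and not the focus of the work.
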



\section{Proof of the local well-posedness result}

In this section we provide a proof of the main result of this work. As explained in the introduction, we start by proving a local well-posedness result for the $4\times 4$ system that comes by differentiating each equation in \eqref{ivp2} with respect to ${x_1}$ and ${x_2}$. Introducing the new unknowns $w_1:=u_{x_1}$, $w_2:=u_{x_2}$, $w_3:=v_{x_1}$and $w_4:=v_{x_2}$, we obtain the following IVP,
\begin{equation}\label{eq3.1}
\begin{cases}
\partial_tw_1 + i\sqrt{-\Delta}(1-\Delta)w_1 = \frac12\big(R_1\partial_{x_1}L_1
+ R_2\partial_{x_1}L_2\big)-\frac14\partial_{x_1}L_3\\
\partial_tw_2 + i\sqrt{-\Delta}(1-\Delta)w_2 =  \frac12\big(R_1\partial_{x_2}L_1
+ R_2\partial_{x_2}L_2\big)-\frac14\partial_{x_2}L_3\\
\partial_tw_1 - i\sqrt{-\Delta}(1-\Delta)w_1 = -\frac12\big(R_1\partial_{x_1}L_1
+ R_2\partial_{x_1}L_2\big)-\frac14\partial_{x_1}L_3\\
\partial_tw_2 - i\sqrt{-\Delta}(1-\Delta)w_2 =  -\frac12\big(R_1\partial_{x_2}L_1
+ R_2\partial_{x_2}L_2\big)-\frac14\partial_{x_2}L_3\\
w_1(x_1, x_2, 0) = w_1(0),\quad  w_2(x_1, x_2, 0) = w_2(0),\\
w_3(x_1, x_2, 0) = w_3(0),\quad  w_4(x_1, x_2, 0) = w_4(0)
\end{cases}
\end{equation}
where
\begin{equation}\label{eq3.2}
\begin{split}
L_1&:= L_1(w_1,w_2,w_3,w_4)=(R_1w_1+R_2w_2-R_1w_3-R_2w_4)(w_1+w_3)\\
L_2&:= L_2(w_1,w_2,w_3,w_4)=(R_1w_1+R_2w_2-R_1w_3-R_2w_4)(w_2+w_4)\\
L_3&:= L_3(w_1,w_2,w_3,w_4)=(w_1+w_3)^2 + (w_2+w_4)^2.
\end{split}
\end{equation}

In what follows we give a local well-posedness result for the IVP \eqref{eq3.1}. Before presenting this result, we introduce some notations. Define  $\mathcal{H}^s(\R^2):= \big(H^s(\R^2)\big)^4$ with norm given by $\|g\|_{\mathcal{H}^s(\R^2)}:=\sum_{j=1}^4\|g_j\|_{H^s(\R^2)}$, for $g=(g_1, g_2, g_3, g_4) \in
\mathcal{H}^s(\R^2)$. We also denote by $w=(w_1, w_2, w_3, w_4)$ the space-time functions in $(C([0, T]:H ^s(\R^2)))^4 \cong C([0, T]:\mathcal{H}^s(\R^2))$.

\begin{Theorem}\label{loc-b}
Let $s>3/2$, then for any $w(0)\in \mathcal{H}^s(\R^2)$, there exist a time $T=T\big(\|w(0)\|_{\mathcal{H}^s(\R^2)}\big)$, a space $\mathcal{X}^s_T$ continuously
embedded in $C([0, T]:\mathcal{H}^s(\R^2))$ and a unique solution $w \in \mathcal{X}^s_T$ to the Cauchy problem \eqref{eq3.1}.
\end{Theorem}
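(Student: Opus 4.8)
The plan is to run a standard contraction-mapping argument on the Duhamel (integral) formulation of \eqref{eq3.1}, built on the linear estimates from Section 2. First I would write the four equations in integral form, using the unitary groups $U^{\pm}$, so that a solution is a fixed point of the map
\[
\Psi(w)_j(t) = U^{\pm}(t)w_j(0) - \int_0^t U^{\pm}(t-t')\,\mathcal{N}_j(w)(t')\,dt',
\]
where each nonlinearity $\mathcal{N}_j$ is, up to Riesz transforms and one spatial derivative, a quadratic expression in $w=(w_1,w_2,w_3,w_4)$ coming from $L_1,L_2,L_3$ in \eqref{eq3.2}. The resolution space $\mathcal{X}^s_T$ should be the subset of $C([0,T]:\mathcal{H}^s)$ whose elements also lie in the auxiliary norms dictated by the linear estimates: for each component and each of the four ``highest'' derivatives $D_x^s$ applied to it, one needs the smoothing norm $\|D_x^{1}(\cdot)\|_{L^2_{Q_\alpha\times[0,T]}}$ uniformly in $\alpha$ (Theorem \ref{th.1}), the maximal-function norm $\ell^2_\alpha L^\infty_T L^\infty_{Q_\alpha}$ (Theorem \ref{th.2}), and the Strichartz norm $L^{q_\delta}_T L^\infty_x$ of $D_x^\delta(\cdot)$ for a suitable $\delta<1/2$ (Theorem \ref{th.3}); the ball of radius $\sim \|w(0)\|_{\mathcal{H}^s}$ in this space, intersected with $C([0,T]:\mathcal{H}^s)$, is where $\Psi$ will be shown to be a contraction.

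The core of the argument is the nonlinear estimate: I must bound $\|D_x^s \mathcal{N}_j(w)\|_{L^1_T L^2_x}$ (and the analogous auxiliary-norm contributions) by $T^{\theta}$ times a quadratic expression in the $\mathcal{X}^s_T$-norm of $w$, for some $\theta>0$. Since $\mathcal{N}_j$ carries one derivative $\partial_{x_k}$ outside a product of two factors, each of which is $w_i$ or $R_l w_i$, I would use the fractional Leibniz rule (Lemma \ref{leibniz}) together with $L^p$-boundedness of the Riesz transforms (Lemma \ref{lem.rz}) to distribute $D_x^{s+1}$ over the product: schematically, $\|D_x^{s+1}(fg)\|_{L^2}\lesssim \|D_x^{s+1}f\|_{L^2}\|g\|_{L^\infty} + \|f\|_{L^\infty}\|D_x^{s+1}g\|_{L^2} + \text{(commutator terms)}$. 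The derivative-gaining factor $D_x$ in the worst term is absorbed by the smoothing estimate \eqref{eq2.2} — this is exactly why the $4\times4$ system, whose nonlinearities are genuine KdV-type quadratic terms with a single outer derivative, is tractable — while the $L^\infty$-in-space factors are controlled either by the maximal function estimate (Theorem \ref{th.2}, the source of the restriction $s>3n/4=3/2$ in dimension $n=2$) or, after using the Sobolev embedding $H^{s}\hookrightarrow L^\infty$ for $s>1$, by the $C([0,T]:\mathcal{H}^s)$ norm directly; time integration of the resulting time-norms, via Hölder in $t$ on $[0,T]$, furnishes the small power $T^{\theta}$. The contraction (Lipschitz) estimate for $\Psi(w)-\Psi(\tilde w)$ is obtained the same way, exploiting bilinearity of the nonlinearity so that differences factor as $w\!-\!\tilde w$ times a bounded factor.

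Assembling these pieces: the linear estimates of Section 2 control $U^{\pm}(t)w_j(0)$ in every component of the $\mathcal{X}^s_T$ norm by $\|w_j(0)\|_{H^s}$ (with $T$-independent or at worst polynomially $T$-growing constants), and the inhomogeneous term is handled by combining those same estimates with Minkowski's integral inequality and the nonlinear bound above; choosing $T=T(\|w(0)\|_{\mathcal{H}^s})$ small makes $\Psi$ map the ball to itself and contract, giving a unique fixed point $w\in\mathcal{X}^s_T$. Persistence and continuity in time, i.e. $w\in C([0,T]:\mathcal{H}^s)$, follow from the standard argument once the fixed point is in hand. I expect the main obstacle to be purely bookkeeping rather than conceptual: one must check that \emph{every} term produced by Leibniz and by the commutators can be placed in a combination of norms for which Section 2 supplies a bound with a positive power of $T$ to spare — in particular making sure the term where the extra derivative $D_x$ lands on the ``derivative-gaining'' factor is matched with the smoothing norm and not with a norm that lacks the $L^2_t$-integrability, and that the low-regularity factors are never asked to supply more than $s>3/2$ derivatives. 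The sharp threshold $s>3/2$ is forced by the maximal function estimate \eqref{eq2.4}, so no cleverness there can lower it within this scheme.
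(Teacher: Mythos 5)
Your proposal is correct and follows essentially the same route as the paper: a contraction argument on Duhamel's formulation of \eqref{eq3.1} in a resolution space built from the $\sup_t H^s$, Strichartz \eqref{eq2.23}, local smoothing \eqref{eq2.2} and maximal function \eqref{eq2.4} norms (with Riesz-transform variants), using the fractional Leibniz rule and $L^p$-boundedness of $R_l$ to distribute the derivatives over the quadratic nonlinearities, pairing the highest-order term smoothing-against-maximal, and extracting a positive power of $T$ by H\"older in time, with $s>3/2$ forced by the maximal estimate. The only cosmetic difference is that the paper first writes $D_x^s\partial_{x_1}=D_x^m(R_1\partial_{x_1}+R_2\partial_{x_2})\partial_{x_1}$ with $m=s-1$ and applies the fractional Leibniz rule only at order $m$, rather than directly at order $s+1$ as you sketch.
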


\begin{proof} We consider only the most difficult cases $\frac32< s<2$, as for $s\geq 2$ the proof is slightly simpler. Let the fractional part of $s$ be $m:=s-1$ so that $m\in (1/2, 1)$, and let $\delta=1-m=2-s$, defining $q_{\delta}$ as in Theorem \ref{th.3}. For $f \in C([0, T]:H ^s(\R^2))$ define

\begin{equation}\label{eq3.3}
\Omega_T^1(f):=\sup_{0\leq t\leq T}\|f\|_{H^s},
\end{equation}
\begin{equation}\label{eq3.4}
\Omega_T^2(f):= \sum_{|\beta|\leq 1}\|\partial_x^{\beta}f\|_{L_T^3L_x^{\infty}}+\|D_x^1f\|_{L_T^3L_x^{\infty}} +\sum_{l=1,2}\sum_{|\beta|\leq 1}\|\partial_x^{\beta}R_lf\|_{L_T^3L_x^{\infty}} +\sum_{l=1,2}\|D_x^1R_lf\|_{L_T^3L_x^{\infty}},
\end{equation}

\begin{equation}\label{eq3.5}
\Omega_T^3(f):= \sum_{|\beta|= 1}\|D_x^1\partial_x^{\beta}f\|_{L_T^{q_\delta}L_x^{\infty}} +\sum_{l=1,2}\sum_{|\beta|= 1}\|D_x^1\partial_x^{\beta}R_lf\|_{L_T^{q_\delta}L_x^{\infty}},
\end{equation}

\begin{equation}\label{eq3.6}
\Omega_T^4(f):= \sum_{|\beta|= 1}\sup_{\alpha\in\Z^2}\|\partial_x^{\beta}D_x^{1+m}f\|_{L_{Q_\alpha\times [0, T]}^2} +\sum_{l=1,2}\sum_{|\beta|= 1}\sup_{\alpha\in\Z^2}\|\partial_x^{\beta}D_x^{1+m}R_lf\|_{L_{Q_\alpha\times [0, T]}^2},
\end{equation}
\begin{equation}\label{eq3.7}
\Omega_T^5(f):= \Big(\sum_{\alpha\in\Z^2}\|f\|^2_{L_{Q_\alpha\times [0, T]}^{\infty}}\Big)^{\frac12}
+\sum_{l=1,2}\Big(\sum_{\alpha\in\Z^2}\|R_lf\|^2_{L_{Q_\alpha\times [0, T]}^{\infty}}\Big)^{\frac12},
\end{equation}
and let $\Omega_T(f):=\max_{j=1,\cdots,5}\Omega_T^j(f)$.

Define the Banach space $X_T^s$ by
\begin{equation*}
X_T^s:= \{f\in C([0, T]:H ^s(\R^2)): \Omega_T(f)<\infty\},
\end{equation*}
with norm $\|f\|_{X_T^s}:=\Omega_T(f)$, and finally define $\mathcal{X}_T^s:=(X_T^s)^4$ with norm $\|w\|_{\mathcal{X}_T^s}:=\sum_{j=1}^4 \|w_j\|_{X_T^s}$, identifying
$(C([0, T]:H ^s(\R^2)))^4$ with $C([0, T]:\mathcal{H}^s(\R^2))$.

For the given initial data $w(0)=(w_1(0), w_2(0), w_3(0), w_4(0)) \in \mathcal{H}^s(\R^2)$ and using Duhamel's formula, we define applications
 \begin{equation}\label{eq3.8}
 \begin{cases}
 \Psi_1(w_1):= U^+(t)(w_1(0)) +\frac12\int_0^t U^+(t-t')\big(R_1\partial_{x_1}L_1
+ R_2\partial_{x_1}L_2-\frac14\partial_{x_1}L_3\big)(t')dt'\\
\Psi_2(w_2):= U^+(t)(w_2(0)) +\frac12\int_0^t U^+(t-t')\big(R_1\partial_{x_2}L_1
+ R_2\partial_{x_2}L_2-\frac14\partial_{x_2}L_3\big)(t')dt'\\
\Psi_3(w_3):= U^-(t)(w_3(0)) -\frac12\int_0^t U^-(t-t') \big(R_1\partial_{x_1}L_1
+ R_2\partial_{x_1}L_2+\frac14\partial_{x_1}L_3\big)(t')dt'\\
\Psi_4(w_4):= U^-(t)(w_4(0)) -\frac12\int_0^t U^-(t-t') \big(R_1\partial_{x_2}L_1
+ R_2\partial_{x_2}L_2+\frac14\partial_{x_2}L_3\big)(t')dt'.
\end{cases}
\end{equation}

Define a ball with center at the origin and radius $a>0$ in $\mathcal{X}_T^s$ by
$\mathcal{B}_a:= \{f\in \mathcal{X}_T^s: \|f\|_{\mathcal{X}_T^s}\leq a\}$. We will prove that there exist $a>0$ and $T>0$ such that the application $\Psi: =(\Psi_1, \Psi_2, \Psi_3, \Psi_4)$ is a contraction map on the ball $\mathcal{B}_a$.

Using linear estimates \eqref{eq2.2}, \eqref{eq2.4}, \eqref{eq2.23}, we have that
\begin{equation}\label{eq3.9}
\|\Psi_j(w_j)\|_{X_T^s}\leq c_0\|w_j(0)\|_{H^s(\R^2)}+c\int_0^T\|F_j(w_1, w_2, w_3, w_4)(t')\|_{H^s(\R^2)}dt',
\end{equation}
 for all $j=1,\cdots, 4$ where $F_j(w_1, w_2, w_3, w_4)$ are the respective nonlinear parts in equation \eqref{eq3.1}.

 The estimates for each component $\Psi_j(w_j)$ follow similar techniques, therefore we present details only for the nonlinear part associated to the first component, $\Psi_1(w_1)$. Note that in view of \eqref{eq3.1} and \eqref{eq3.2} the terms in $F_1(w)$ are of the form  $R_i\partial_{x_1}\{(R_lw_j)w_k\}$ and $\partial_{x_1}(w_jw_k)$, for $j, k =1, \cdots, 4$ and $i,l=1,2$.

 Again, we only provide details for the terms that are in  the form $R_i\partial_{x_1}\{(R_lw_j)w_k\}$,  i.e., we find estimates for $\int_0^T\|R_i\partial_{x_1}\{(R_lw_j)w_k\}(t')\|_{H_x^s}dt'$. The estimates for the other terms follow a similar method and are even easier to handle. Using the fact that the Riesz transform is bounded in $L^p$, $1<p<\infty$, we have
 \begin{equation}\label{eq3.10}
 \begin{split}
 \int_0^T\|R_i\partial_{x_1}\{(R_lw_j)w_k\}(t')\|_{H_x^s}dt'&\leq  \int_0^T\|\partial_{x_1}\{(R_lw_j)w_k\}(t')\|_{L_x^2}dt'\\
 &\qquad + \int_0^T\|D_x^s\partial_{x_1}\{(R_lw_j)w_k\}(t')\|_{L_x^2}dt'\\
 &=:I+II.
 \end{split}
 \end{equation}

Now,
 \begin{equation}\label{eq3.11}
 \begin{split}
 I&\lesssim \int_0^T\|(R_l\partial_{x_1}w_j) w_k\|_{L_x^2}dt'+\int_0^T\|(R_lw_j) (\partial_{x_1}w_k)\|_{L_x^2}dt'\\
 &\lesssim \int_0^T\|R_l\partial_{x_1}w_j\|_{L_x^2}\|w_k\|_{L_x^{\infty}}dt' + \int_0^T\|R_lw_j\|_{L_x^2} \|\partial_{x_1}w_k\|_{L_x^{\infty}}dt'\\
 &\lesssim \|w_k\|_{L_T^{\infty}L_x^{\infty}} T\|\partial_{x_1}w_j\|_{L_T^{\infty}L_x^2} +\|w_j\|_{L_T^{\infty}L_x^2}T^{\frac32}\|\partial_{x_1}w_k\|_{L_T^3L_x^{\infty}}\\
 &\lesssim T\Omega_T^1(w_k) \Omega_T^1(w_j) +T^{\frac32}\Omega_T^1(w_j) \Omega_T^2(w_k).
 \end{split}
 \end{equation}

 To estimate $II$, recall the notation $\sqrt{-\Delta}=D_x^1 = R_1\partial_{x_1}+R_2\partial_{x_2}$ so that we can write $D_x^s = D_x^1D_x^m = D_x^mR_1\partial_{x_1}+D_x^mR_2\partial_{x_2}$. Therefore we have
 $$D_x^s\partial_{x_1}\{(R_lw_j)w_k\}=D_x^m\big( (R_l\partial_{x_1}D_x^1w_j )w_k +(R_l\partial_{x_1}w_j)D_x^1w_k + (R_lD_x^1w_j)\partial_{x_1}w_k+(R_lw_j)D_x^1\partial_{x_1}w_k\big).$$
 With this decomposition, we can split $II$ into four terms
 \begin{equation}\label{eq3.12}
 \begin{split}
 II&\leq \int_0^T\|D_x^m\big( R_l\partial_{x_1}D_x^1w_j w_k\big)\|_{L_x^2}dt'
 +\int_0^T\|D_x^m\big(R_l\partial_{x_1}w_jD_x^1w_k \big)\|_{L_x^2}dt'\\
 &\qquad +\int_0^T\|D_x^m\big(R_lD_x^1w_j\partial_{x_1}w_k \big)\|_{L_x^2}dt'
 +\int_0^T\|D_x^m\big( R_lw_jD_x^1\partial_{x_1}w_k\big)\|_{L_x^2}dt'\\
 &=: a+b+c+d.
 \end{split}
 \end{equation}

 Using Leibniz's rule for fractional derivatives \eqref{leb.1}, $L^p$-boundedness of the Riesz transform \eqref{rz.1} and H\"older's inequality, one gets
 \begin{equation}\label{eq3.13}
 \begin{split}
 a&\lesssim \int_0^T\|R_l\partial_{x_1}D_x^1w_j\|_{L_x^{\infty}}\|D_x^mw_k\|_{L_x^2}dt' +\int_0^T\|(R_l\partial_{x_1}D_x^{1+m}w_j) w_k\|_{L_x^2}dt'\\
 &\lesssim \|D_x^mw_k\|_{L_T^{\infty}L_x^2}T^{\frac1{q_{\delta}'}}\|R_l\partial_{x_1}D_x^1w_j\|_{L_T^{q_{\delta}}L_x^{\infty}} + T^{\frac12}\Big(\sum_{\alpha\in\Z^2}\!\int_0^T\!\!\!\!\int_{Q_{\alpha}}\!\!\!\!|w_kR_l\partial_{x_1}D_x^{1+m}w_j|^2dxdt\Big)^{\frac12}\\
 &\lesssim T^{\frac1{q_{\delta}'}}\Omega_T^1(w_k) \Omega_T^3(w_j)
  +T^{\frac12}\Big(\sum_{\alpha\in\Z^2}\|w_k\|_{L^{\infty}_{Q_\alpha\times [0, T]}}^2\Big)^{\frac12} \sup_{\alpha\in\Z^2}\|R_l\partial_{x_1}D_x^{1+m}w_j\|_{L_{Q_\alpha\times [0, T]}^2}\\
 &\lesssim T^{\frac1{q_{\delta}'}}\Omega_T^1(w_k) \Omega_T^3(w_j)+T^{\frac12}\Omega_T^5(w_k)\Omega_T^4(w_j).
 \end{split}
 \end{equation}

 Likewise
 \begin{equation}\label{eq3.14}
 \begin{split}
 b&\lesssim \int_0^T\|\partial_{x_1}R_lw_j\|_{L_x^{\infty}}\|D_x^{1+m}w_k\|_{L_x^2}dt' +\int_0^T\|(D_x^m\partial_{x_1}R_lw_j) D_x^1w_k\|_{L_x^2}dt'\\
 &\lesssim \|D_x^{1+m}w_k\|_{L_T^{\infty}L_x^2}\int_0^T\|\partial_{x_1}R_lw_j\|_{L_x^{\infty}}dt' +\int_0^T\|D_x^1w_k\|_{L_x^{\infty}}\|D_x^m\partial_{x_1}R_lw_j\|_{L_x^2}dt'\\
 &\lesssim\Omega_T^1(w_k)T^{\frac32}\|\partial_{x_1}R_lw_j\|_{L_T^3L_x^{\infty}} + \|D_x^m\partial_{x_1}w_j\|_{L_T^{\infty}L_x^2}T^{\frac32}\|D_x^1w_k\|_{L_T^3L_x^{\infty}}\\
 & \lesssim T^{\frac32}\Omega_T^1(w_k)\Omega_T^2(w_j) + T^{\frac32}\Omega_T^1(w_j)\Omega_T^2(w_k).
 \end{split}
 \end{equation}

The estimates for the terms $c$ and $d$ are similar to those for the terms $b$ and $a$, respectively, so we omit the details. Inserting the estimates \eqref{eq3.10}--\eqref{eq3.14} in \eqref{eq3.9}, one obtains
\begin{equation}\label{eq3.15}
\|\Psi_1(w_1)\|_{X_T^s}\leq c_0\|w_1(0)\|_{H^s(\R^2)}+cT^{\varepsilon}\big(\|w\|_{\mathcal{X}_T^s}\big)^2,
\end{equation}
for some $\varepsilon>0$. Gathering the similar estimates for the other components of $\Psi$, we get
\begin{equation}\label{eq3.16}
 \|\Psi(w)\|_{\mathcal{X}_T^s}\leq c_0\|w(0)\|_{\mathcal{H}^s(\R^2)}+4cT^{\varepsilon}\big(\|w\|_{\mathcal{X}_T^s}\big)^2.
 \end{equation}

 Given $w(0)\in\mathcal{H}^s(\R^2)$, if we choose $a= 2c_0\|w(0)\|_{\mathcal{H}^s(\R^2)}$ and $T>0$ in such a way that $4cT^{\varepsilon}\|w\|_{\mathcal{X}_T^s}<\frac12$, it is easy to show that $\Psi$ maps $\mathcal{B}_a$ onto itself. A totally analogous method is then used to prove that the application $\Psi$ is a contraction mapping. The remaining part of the proof follows by a standard argument.
\end{proof}

\begin{proof}[Proof of Theorem \ref{mainTh}] Having established Theorem \ref{loc-b}, the proof of Theorem \ref{mainTh} follows easily. For data $\eta_0 \in H^s(\R^2)$ and $\Phi_0 \in \mathcal{V}^{s+1}(\R^2)$, we start by defining
\begin{equation}
\label{di}
u_0=-\frac{i}{2\sqrt{-\Delta}}\eta_0 + \frac12 \Phi_0 \qquad \mbox{and} \qquad v_0=\frac{i}{2\sqrt{-\Delta}}\eta_0 + \frac12 \Phi_0,
\end{equation}
and then
$$w_1(0)=\pd_{x_1}u_0, \quad w_2(0)=\pd_{x_2}u_0, \quad w_3(0)=\pd_{x_1}v_0, \quad w_4(0)=\pd_{x_2}v_0,$$
noting that, from our definition of $\mathcal{V}^{s+1}(\R^2)$ we get $w(0) \in \mathcal{H}^s(\R^2)$, with $s>3/2$.

We can now apply Theorem \ref{loc-b} to $w(0)$, obtaining a unique solution $w(t) \in \mathcal{X}^s_T$. Differentiating
the first equation in \eqref{eq3.1} in $x_2$ and the second in $x_1$, shows that
$$\pd_t (\pd_{x_2}w_1-\pd_{x_1}w_2) =0.$$
Doing the same for the third and fourth equations, equally yields
$$\pd_t (\pd_{x_2}w_3-\pd_{x_1}w_4) =0.$$
From our construction of the initial data, we have
$$\pd_{x_2}w_1(0)-\pd_{x_1}w_2(0)=0 \qquad \mbox{and} \qquad \pd_{x_2}w_3(0)-\pd_{x_1}w_4(0)=0,$$
which, from the two previous relations, proves that the vector fields $(w_1,w_2)$ and $(w_3,w_4)$ are closed in $\R^2$ for all $t \in [0,T]$. Therefore,
there exist unique $u,v \in C([0,T] : \mathcal{V}^{s+1}(\R^2))$ such that $(\pd_{x_1} u, \pd_{x_2} u)=(w_1(t), w_2(t))$ and $(\pd_{x_1} v, \pd_{x_2} v)
=(w_3(t), w_4(t))$, satisfying $u(0)=u_0$ and $v(0)=v_0$ according to \eqref{di}. Finally, we invert back from $(u,v)$ to $(\eta, \Phi)$ using formula \eqref{eq1.6}. Tracing the steps of this proof, we observe that this solution is obtained uniquely in the space
$\mathcal{Y}_T^s=\{(\eta,\Phi) : \eta \in X^s_T, \,\, \sqrt{-\Delta} \Phi \in X^s_T\}$ which is continuously embedded in $C([0, T]:H^s(\R^2)\times \mathcal{V}^{s+1}(\R^2)) \cong C([0, T]:H^s(\R^2))\times C([0, T]:\mathcal{V}^{s+1}(\R^2))$.
\end{proof}



\begin{thebibliography}{99}

\bibitem{GPW} Z. Guo, L. Peng, B. Wang, {\em Decay estimates for a class of wave equations}, J. Funct. Anal., {\bf 254} (2008) 1642--1660.

\bibitem{KT} M. Keel, T. Tao, {\em Endpoint Strichartz estimates}, Amer. J. Math., {\bf 120} (1998) 955--980.

\bibitem{KPV1} C. E. Kenig, G. Ponce and L. Vega, {\em Oscillatory integrals and regularity of dispersive equations}, Indiana Univ. Math. J., {\bf 40} (1991) 33--69.

\bibitem{KPV2} C. E. Kenig, G. Ponce and L. Vega, {\em Well-posedness of the initial-value problem for the Korteweg-de Vries equation}, J. Amer. Math. Soc., {\bf 4} (1991) 323--347.

\bibitem{KPV3} C. E. Kenig, G. Ponce and L. Vega, {\em Small solutions to nonlinear Schr\"odinger equations}, Ann. Inst. Henri Poincar\'e, {\bf 10} (1993) 255--288.

\bibitem{KPV4} C. E. Kenig, G. Ponce and L. Vega, {\em Well-posedness and scattering results for the generalized Korteweg-de Vries equation via the contraction principle}, Comm. Pure and Applied Math., {\bf 46} (1993) 527--620.

\bibitem{LPS} F. Linares, D. Pilod, J-C Saut, {\em Well-posedness of strongly dispersive two-dimensional surface waves Boussinesq systems}, arXiv:1103.4159, 2011.

\bibitem{Q2} J. R. Quintero, {\em Solitary water waves for a 2D Boussinesq type system}, J. Partial Differ. Equ. {\bf 23} (2010),  251--280.

\bibitem{Q1} J. R. Quintero, {\em The Cauchy problem and stability of solitary waves for a 2D Boussinesq-KdV type system}, Differential Integral Equations {\bf 24} (2011), 325--360.

\bibitem{SW} E. M. Stein and G. Weiss, {\em Introduction to Fourier Analysis in Euclidean Spaces}, Princeton University Press, Princeton, N.J., 1971.

\end{thebibliography}
\end{document}